\documentclass[a4paper,intlimits,12pt,reqno]{amsart}
\usepackage[T1]{fontenc}
\usepackage[utf8]{inputenc}

\usepackage{latexsym,amsthm,amsmath,amssymb,mathrsfs,mathtools}
\usepackage{tikz}
\usetikzlibrary{arrows,calc,patterns}
\usepackage{wrapfig}

\usepackage[mathscr]{eucal}

\setlength{\oddsidemargin}{1pt}
\setlength{\evensidemargin}{1pt}
\setlength{\topmargin}{1pt}       
\setlength{\textheight}{650pt}    
\setlength{\textwidth}{460pt}     

\belowdisplayskip=18pt plus 6pt minus 12pt \abovedisplayskip=18pt
plus 6pt minus 12pt
\parskip 8pt plus 1pt

\newcommand{\A}{\mathcal A}
\newcommand{\B}{\mathcal B}
\newcommand{\C}{\mathcal C}

\newcommand{\bbbr}{\mathbb R}
\newcommand{\bbbb}{\mathbb B}
\newcommand{\bbbs}{\mathbb S}

\newcommand{\N}{\mathbb N}
\newcommand{\R}{\mathbb R}
\newcommand{\overbar}[1]{\mkern 1.7mu\overline{\mkern-1.7mu#1\mkern-1.5mu}\mkern 1.5mu}
\def\Vol{\operatorname{Vol}}
\newcommand{\eps}{\varepsilon}
\newcommand{\tf}{\tilde{f}}

\def\diam{\operatorname{diam}}
\def\dist{\operatorname{dist}}

\def\Log{\operatorname{Log}}
\DeclareMathOperator*{\osc}{osc}
\newtheorem{theorem}{Theorem}
\newtheorem*{theorem*}{Theorem}
\newtheorem{lemma}[theorem]{Lemma}
\newtheorem{corollary}[theorem]{Corollary}
\newtheorem{proposition}[theorem]{Proposition}

\newtheorem{question}{Question}

\usepackage{setspace}

\title[Finite distortion mappings between manifolds]{Finite distortion Sobolev mappings between manifolds are continuous}
\author[Goldstein]{Pawe\l{} Goldstein}
\address{Pawe\l{} Goldstein,\newline \indent Institute of Mathematics,\newline \indent Faculty of Mathematics, Informatics and Mechanics, \newline \indent University of Warsaw \newline \indent Banacha 2, 02-097 Warsaw, Poland}
\email{goldie@mimuw.edu.pl}
\thanks{P.G. was partially supported by NCN grant no. 2012/05/E/ST1/03232 (years 2013-17).}
\author[Haj\l{}asz]{Piotr Haj\l{}asz}
\address{Piotr Haj\l{}asz,\newline \indent Department of Mathematics, University of Pittsburgh, \newline \indent 301 Thackeray Hall, Pittsburgh,
Pennsylvania 15260}
\email{hajlasz@pitt.edu}
\thanks{P.H. was supported by NSF grant DMS-1500647.}

\author[Pakzad]{Mohammad Reza Pakzad}
\address{Mohammad Reza Pakzad, \newline \indent Department of Mathematics, University of Pittsburgh, \newline \indent 301 Thackeray Hall, Pittsburgh,
Pennsylvania 15260}
\email{pakzad@pitt.edu}
\thanks{M.R.P. was partially supported by NSF grant DMS-1210258.}

\subjclass[2010]{30C65 (46E35,58C07)}
\keywords{mappings of finite distortion, quasiregular mappings, Sobolev mappings between manifolds}

\begin{document}

\sloppy

\begin{abstract}
We prove that if $M$ and $N$ are Riemannian, oriented $n$-dimensional manifolds without boundary and
additionally $N$ is compact, then Sobolev mappings $W^{1,n}(M,N)$ of finite distortion are continuous.
In particular, $W^{1,n}(M,N)$ mappings with almost everywhere positive Jacobian are continuous.
This result has been known since 1976 in the case of mappings $W^{1,n}(\Omega,\bbbr^n)$, where $\Omega\subset\bbbr^n$
is an open set. The case of mappings between manifolds is much more difficult. 
\end{abstract}

\maketitle

\section{Introduction}

Sobolev functions in $W^{1,p}(\bbbr^n)$ are H\"older continuous when $p>n$. However, if ${1\leq p\leq n}$, Sobolev functions can be discontinuous. For example,
one can easily check that $f(x)=\log|\log|x||\in W^{1,n}$ in a neighborhood of the origin. One can use this example to create a function in $W^{1,n}$
that has $\log|\log|x||$ type singularities located on a dense subset of $\bbbr^n$: we add singularities one by one, with centers located on a countable and dense subset of $\bbbr^n$
and if we do it carefully, we obtain a Cauchy sequence in the $W^{1,n}$ norm, so it converges to a $W^{1,n}$ function. This is to say that, without much effort, one can construct 
a function in $W^{1,n}$ such that on every open subset of $\bbbr^n$ its essential supremum is $+\infty$ and the essential infimum is $-\infty$.  

However,  Vodop'janov and Gol'd{\v{s}}te{\u\i}n \cite{VodoGold} proved that if a mapping $f:\Omega\to\bbbr^n$ of class $W^{1,n}$, defined on a domain $\Omega\subset\bbbr^n$, 
has positive Jacobian, $J_f>0$, almost everywhere, then
$f$ is continuous. Moreover, the mapping is not only continuous, but it also satisfies a certain logarithmic estimate for the modulus of continuity (see Theorem~\ref{thm:osc}).
This estimate (and the idea of the proof) is strictly related to the celebrated Courant-Lebesgue Lemma \cite[Lemma~8.3.5]{Jost}.
In fact, Vodop'janov and Gol'd{\v{s}}te{\u\i}n proved a slightly stronger result that $W^{1,n}$ mappings of finite distortion are continuous. 

We say that a mapping $f\in W^{1,n}(\Omega,\bbbr^n)$ has finite distortion if at almost every point of $\Omega$ either $J_f>0$ or $Df=0$. That is, if the Jacobian $J_f(x)$ is not positive, then necessarily the whole derivative $Df(x)$ must be equal zero.
Thus $W^{1,n}$ mappings with Jacobian positive almost everywhere are of finite distortion.

For applications of the theory of mappings of finite distortion we refer the reader to the monograph \cite{HenclKoskela} and references therein.

A modern version of the result of  Vodop'janov and Gol'd{\v{s}}te{\u\i}n reads as follows:
\begin{theorem}
\label{thm:osc}
If a mapping $f\in W^{1,n}(\Omega,\bbbr^n)$ has finite distortion, then $f$ 
has a continuous representative which
satisfies  the estimate
\begin{equation}
\label{osc1}
\left( \osc_{B(x_{o},r)} f\right)^{n} \leq \frac{C(n)}{\log(R/r)}\int_{B(x_{o},R)} |D f(x)|^{n}\, dx,
\end{equation}
provided $B(x_{o},R)\subset\Omega$ and $0<r<R$.
\end{theorem}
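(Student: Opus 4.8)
The plan is to use the classical Courant–Lebesgue argument on spheres, combined with the finite distortion hypothesis to control the Jacobian (and hence the image) of small balls. First I would fix $x_o$ and $0<r<R$ with $B(x_o,R)\subset\Omega$, and for $\rho\in(r,R)$ consider the restriction of $f$ to the sphere $S_\rho=\partial B(x_o,\rho)$. By Fubini's theorem and the fact that $f\in W^{1,n}(\Omega,\R^n)$, for a.e.\ $\rho$ the restriction $f|_{S_\rho}$ lies in $W^{1,n}(S_\rho,\R^n)$, hence (since $\dim S_\rho=n-1<n$) is continuous; moreover $\int_{S_\rho}|Df|^n\,d\mathcal{H}^{n-1}<\infty$. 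The standard averaging estimate (Chebyshev over the annulus $B(x_o,R)\setminus B(x_o,r)$ in the logarithmic variable $t=\log\rho$) produces a radius $\rho\in(r,R)$ for which
\[
\rho^{\,n-1}\int_{S_\rho}|Df|^n\,d\mathcal{H}^{n-1}\;\le\;\frac{C(n)}{\log(R/r)}\int_{B(x_o,R)}|Df|^n\,dx .
\]
Combining this with the Sobolev/Morrey embedding on $S_\rho$ (or with Hölder's inequality followed by the $1$-dimensional fundamental theorem of calculus along great circles, exactly as in the Courant–Lebesgue Lemma), one gets that the oscillation of $f$ on the sphere $S_\rho$ is bounded:
\[
\Bigl(\osc_{S_\rho} f\Bigr)^n\;\le\;\frac{C(n)}{\log(R/r)}\int_{B(x_o,R)}|Df|^n\,dx .
\]

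The remaining, and genuinely harder, point is to pass from control of the oscillation \emph{on the sphere} $S_\rho$ to control of the oscillation \emph{on the whole ball} $B(x_o,\rho)$, i.e.\ to show $\osc_{B(x_o,\rho)}f\le C\,\osc_{S_\rho}f$. This is where finite distortion enters and where the argument departs from a soft Sobolev estimate. The idea is topological: $f|_{S_\rho}$ is a continuous map into the small sphere-sized set $f(S_\rho)$, which is contained in some ball $B(y_0,\delta)$ with $\delta=\osc_{S_\rho}f$. If some point $z\in B(x_o,\rho)$ had $f(z)$ far from $B(y_0,\delta)$, then $f$ would have nonzero topological degree on $B(x_o,\rho)$ over a point $w\notin \overbar{B(y_0,\delta)}$ (using that $f$ is continuous, which we have already justified on a.e.\ sphere, together with a limiting/approximation argument, or by working with the precise representative). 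Nonzero degree over $w$ would force, via the change of variables / degree formula for Sobolev maps, that $\int_{B(x_o,\rho)} J_f \neq 0$ on the preimage, hence that $J_f>0$ on a set of positive measure mapping near $w$; but the finite distortion condition says that wherever $J_f\le 0$ we have $Df=0$, so $J_f\ge 0$ a.e., and one derives a contradiction with the smallness forced by $\int_{B(x_o,R)}|Df|^n$ being finite — more precisely, the degree being nonzero over a large ball of radius $\sim\dist(f(z),B(y_0,\delta))$ would require $\int_{B(x_o,\rho)}|Df|^n$ to be at least a definite multiple of that radius to the $n$, which, chaining back through the Courant–Lebesgue estimate, cannot happen once $r$ is close enough to $R$; letting the annulus shrink then pins the image of the ball inside a controlled neighbourhood of $f(S_\rho)$.

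Once that geometric/topological step is in place, one concludes as follows. The estimate $\bigl(\osc_{B(x_o,\rho)}f\bigr)^n\le \frac{C(n)}{\log(R/r)}\int_{B(x_o,R)}|Df|^n$ holds for the good radius $\rho\in(r,R)$, hence a fortiori for $\osc_{B(x_o,r)}f$ since $B(x_o,r)\subset B(x_o,\rho)$, giving \eqref{osc1}. Continuity of the representative is then automatic: applying \eqref{osc1} at an arbitrary point $x$ and letting $r\to 0$ shows that $f$ has vanishing oscillation at every point after modification on a null set (the absolute continuity of the integral makes the right-hand side tend to $0$), so the precise representative is continuous, indeed with the stated logarithmic modulus of continuity. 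I expect the main obstacle to be making the degree-theoretic step rigorous for $W^{1,n}$ maps — justifying that the (a.e.-defined) continuous representative has a well-defined Brouwer degree on balls, that the degree formula $\deg(f,B,w)=\int_B J_f$-type identity is available under only $W^{1,n}$ regularity and finite distortion, and handling the exceptional spheres where $f|_{S_\rho}$ may fail to be continuous; everything else is the routine Courant–Lebesgue computation.
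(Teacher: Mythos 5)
Your overall skeleton --- Courant--Lebesgue averaging over spheres plus a ``weak monotonicity'' step $\osc_{B}f\lesssim\osc_{\partial B}f$ driven by finite distortion --- is exactly the paper's, and the first half (Fubini, Morrey on a.e.\ sphere, Chebyshev selection of a good radius; note that the scale-invariant quantity is $\rho\int_{S_\rho}|Df|^n$, not $\rho^{n-1}\int_{S_\rho}|Df|^n$) is fine. The gap is in the monotonicity step, where your degree argument runs in the wrong direction. First, for a continuous map, $w\in f(B)$ does \emph{not} imply $\deg(f,B,w)\neq 0$, so ``$f(z)$ far from $B(y_0,\delta)$'' gives you no nonzero degree to work with. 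Second, your proposed contradiction --- that nonzero degree over a large set ``would require $\int_{B(x_o,\rho)}|Df|^n$ to be at least a definite multiple of that radius to the $n$, which cannot happen once $r$ is close enough to $R$'' --- is vacuous here: Theorem~\ref{thm:osc} carries no smallness hypothesis on $\int_{B(x_o,R)}|Df|^n$ (unlike the manifold version, Theorem~\ref{from ball}), and the estimate must hold for every $0<r<R$, so there is nothing small to contradict. As written, the step that ``pins the image of the ball'' is not proved.

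The correct logic is the reverse of yours. Let $\overbar{B}$ be a closed ball containing $f(\partial B(x_o,\rho))$ of comparable diameter and suppose $K=f^{-1}(\bbbr^n\setminus\overbar{B})\cap B(x_o,\rho)$ has positive measure. On one hand, since $f$ is absolutely continuous on a.e.\ line and each such line meeting $K$ must carry $f$ from $\overbar{B}$ out to $f(K)$ and back, $Df\neq 0$ on a positive-measure subset of $K$; finite distortion then gives $\int_K J_f>0$. On the other hand, composing with the radial retraction $P\colon\bbbr^n\to\overbar{B}$ kills the Jacobian on $K$ while preserving it elsewhere and preserving the trace on $\partial B(x_o,\rho)$, and $W^{1,n}$ maps with equal traces have equal Jacobian integrals; hence $\int_K J_f=0$ --- a contradiction. (Equivalently, in your degree language: the boundary values lie in $\overbar{B}$, so the degree over every $w\notin\overbar{B}$ is \emph{zero}; the degree formula plus $J_f\ge 0$ then forces $J_f=0$ a.e.\ on $K$, hence $Df=0$ a.e.\ on $K$, contradicting the line argument.) Your concluding continuity argument from \eqref{osc1} is fine modulo the usual care with the null sets on which the essential oscillation is realized.
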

For a proof see e.g. \cite{hajlaszm}. The result in \cite{hajlaszm} is stated for $K$-pseudomonotone mappings. Vodop'janov and Gol'd{\v{s}}te{\u\i}n proved that mappings of finite distortion are weakly monotone and it is easy to verify that weakly monotone mappings are 1-pseudomonotone (see e.g. \cite{hajlaszm}), so the result follows.
Anyway, we will sketch the proof of Theorem~\ref{thm:osc} in Section~\ref{sec:1}.

\begin{corollary} 
If $f\in W^{1,n}(\Omega,\bbbr^{n})$ and $J_{f}>0$ a.e., then $f$ has a continuous representative.
\end{corollary}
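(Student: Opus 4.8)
The plan is to derive this directly from Theorem~\ref{thm:osc}, since it is essentially a restatement of the implication ``finite distortion $\Rightarrow$ continuous'' under a stronger hypothesis on the Jacobian. The only point that needs checking is that the hypothesis $J_f>0$ a.e.\ places $f$ in the class of mappings of finite distortion.

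First I would recall the definition: $f\in W^{1,n}(\Omega,\bbbr^n)$ has finite distortion if at almost every $x\in\Omega$ \emph{either} $J_f(x)>0$ \emph{or} $Df(x)=0$. This is a disjunction, so it suffices to verify one of the two alternatives almost everywhere. If $J_f>0$ a.e., then the first alternative holds at a.e.\ point, and hence $f$ is automatically a mapping of finite distortion. (Indeed, the set where $J_f\le 0$ has measure zero, so the requirement on that set is vacuous.)

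Next I would invoke Theorem~\ref{thm:osc} applied to $f$: it yields a continuous representative of $f$ satisfying the oscillation bound \eqref{osc1} on every ball $B(x_o,R)\subset\Omega$. Since $\Omega$ is open, each point of $\Omega$ is the center of some such ball, so the representative furnished by Theorem~\ref{thm:osc} is continuous on all of $\Omega$; continuity being a local property, no gluing argument beyond this observation is needed.

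I do not expect any genuine obstacle here — the statement is a formal corollary of Theorem~\ref{thm:osc}. The only conceptual care required is to read the finite distortion condition as an ``either/or'' so that positivity of the Jacobian a.e.\ is recognized as a (strictly stronger) special case, and to note that the local estimate \eqref{osc1} immediately globalizes over the open set $\Omega$.
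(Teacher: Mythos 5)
Your proposal is correct and follows exactly the route the paper intends: the observation that $J_f>0$ a.e.\ trivially implies finite distortion (the paper states this explicitly just before Theorem~\ref{thm:osc}), after which the corollary is an immediate application of that theorem, with continuity globalizing over $\Omega$ since it is a local property. No gaps.
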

The proof uses in an essential way 
the existence of the radial retraction ${P\colon\bbbr^{n}\to \overbar{B(x_{o},r)}}$,
which maps $\bbbr^{n}\setminus B^{n}(x_{o},r)$ onto $S^{n-1}(x_{o},r)=\partial B^{n}(x_{o},r)$.

Using a similar argument one can extend the result to the case of Orlicz-Sobolev mappings:
\begin{theorem}
Assume $f\colon\Omega\to\bbbr^{n}$ has finite distortion and $Df\in L^{n}\Log^{-1}$. Then $f$ has a continuous representative.
\end{theorem}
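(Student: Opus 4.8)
The plan is to imitate the proof of Theorem~\ref{thm:osc}: show that $f$ is weakly monotone, and then read off a modulus of continuity from a Courant--Lebesgue estimate on spheres, the only change being that the $L^{n}$ energy is replaced by $\int\Phi(|Df|)$, where $\Phi(t)=t^{n}/\log(e+t)$ is the defining Young function of $L^{n}\Log^{-1}$. Since $\Phi$ satisfies the $\Delta_2$ condition, we may assume $\int_{\Omega}\Phi(|Df|)\,dx<\infty$.

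\emph{Step 1 (weak monotonicity).} A finite distortion map with $Df\in L^{n}\Log^{-1}$ is weakly monotone, by the same retraction argument of Vodop'janov and Gol'd{\v{s}}te{\u\i}n as in the $W^{1,n}$ case: that argument only uses that the radial retraction $P$ of $\bbbr^{n}$ onto a closed ball is $1$-Lipschitz, so that $|D(P\circ f)|\le|Df|$ keeps $P\circ f$ in the same space, together with $J_{f}\ge 0$ a.e.\ and $|Df|\in L^{n-1}_{\mathrm{loc}}$ (indeed $t^{n-1}\le 2^{n-1}+\Phi(t)$). Consequently, for every $x_{o}$ with $B(x_{o},R)\subset\Omega$ and a.e.\ $\rho\in(0,R)$, the trace $f|_{S(x_{o},\rho)}$ has a continuous representative and $\osc_{B(x_{o},r)}f\le\osc_{S(x_{o},\rho)}f$ whenever $0<r<\rho$.

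\emph{Step 2 (Courant--Lebesgue with a radius-dependent exponent).} Fix such an $x_{o}$ and shrink $R$ so that $R<e^{-2n}$. For $\rho\in(0,R)$ put $p_{\rho}:=n\bigl(1-1/\log(1/\rho)\bigr)$, so that $n-\tfrac12<p_{\rho}<n$. The tangential gradient of $f|_{S(x_{o},\rho)}$ is dominated by $|Df|$, so Morrey's inequality on the $(n-1)$-dimensional sphere $S(x_{o},\rho)$, rescaled to $S^{n-1}$, gives for a.e.\ $\rho$
\[
\bigl(\osc_{S(x_{o},\rho)}f\bigr)^{p_{\rho}}\le C(n)\,\rho^{\,p_{\rho}-(n-1)}\int_{S(x_{o},\rho)}|Df|^{p_{\rho}}\,d\sigma_{\rho}=C(n)\,\rho\int_{S(x_{o},\rho)}|Df|^{p_{\rho}}\,d\sigma_{\rho},
\]
where the Morrey constant is uniform because $p_{\rho}$ stays above $n-\tfrac12$, and the identity $\rho^{\,p_{\rho}-(n-1)}=\rho\cdot\rho^{-n/\log(1/\rho)}=e^{n}\rho$ is exactly why $p_\rho$ is chosen this way. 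Splitting off $\{|Df|<1\}$ and using $\sup_{t\ge1}\log(e+t)/t^{\,n-p_{\rho}}\le C(n)\log(1/\rho)$ (the supremum is of order $(n-p_{\rho})^{-1}$), one obtains $\int_{S(x_{o},\rho)}|Df|^{p_{\rho}}\,d\sigma_{\rho}\le C(n)\rho^{n-1}+C(n)\log(1/\rho)\int_{S(x_{o},\rho)}\Phi(|Df|)\,d\sigma_{\rho}$, hence
\[
\bigl(\osc_{S(x_{o},\rho)}f\bigr)^{p_{\rho}}\le C(n)\,\rho^{n}+C(n)\,\rho\log(1/\rho)\int_{S(x_{o},\rho)}\Phi(|Df|)\,d\sigma_{\rho}.
\]

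\emph{Step 3 (integration in the radius, and the main obstacle).} Dividing by $C(n)\rho\log(e/\rho)$, integrating over $\rho\in(r,R)$, and using Fubini in polar coordinates together with $\rho\log(1/\rho)\le\rho\log(e/\rho)$ and $\rho^{n}\le\rho^{n-1}\cdot\rho\log(e/\rho)$ yields
\[
\int_{r}^{R}\frac{\bigl(\osc_{S(x_{o},\rho)}f\bigr)^{p_{\rho}}}{C(n)\,\rho\log(e/\rho)}\,d\rho\le\int_{0}^{R}\rho^{n-1}\,d\rho+\int_{B(x_{o},R)}\Phi(|Df|)\,dx=:A_{x_{o}}<\infty .
\]
If $\osc_{S(x_{o},\rho)}f\ge\varepsilon$ for a.e.\ $\rho\in(r,R)$, the left side is at least $C(n)^{-1}\min(\varepsilon^{n-1},\varepsilon^{n})\bigl(\log\log(e/r)-\log\log(e/R)\bigr)$, which exceeds $A_{x_{o}}$ once $r$ is small; hence $\essinf_{\rho\in(r,R)}\osc_{S(x_{o},\rho)}f\to0$ as $r\to0$. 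With Step~1 this gives $\osc_{B(x_{o},r)}f\to0$ as $r\to0$, locally uniformly in $x_{o}$ (the bound depends on $x_{o}$ only through $A_{x_{o}}$), so the precise representative of $f$ is continuous on $\Omega$, with modulus of continuity of order $(\log\log(1/r))^{-1/n}$. The genuinely delicate step is Step~1 --- absorbing the weakening from $W^{1,n}$ to $W^{1,n}\Log^{-1}$ into the weak monotonicity argument; the variable exponent in Step~2 is then forced, since a fixed subcritical exponent $p<n$ loses too much (the resulting weight $\rho^{-(p-n+1)}$ has finite mass near $0$), whereas the critical scaling produces the weight $\tfrac{d\rho}{\rho\log(e/\rho)}$, whose mass near $0$ is infinite --- at the cost of the slower $\log\log$ rate.
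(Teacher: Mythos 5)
Your overall architecture --- weak monotonicity plus a Courant--Lebesgue estimate run with the radius-dependent exponent $p_\rho=n\bigl(1-1/\log(1/\rho)\bigr)$, yielding a $(\log\log(1/r))^{-1/n}$ modulus --- is exactly how this theorem is proved in the sources the paper cites for it (\cite[Chapter 2]{HenclKoskela}, \cite[Chapter 7]{IwaniecM}); the paper itself offers no proof. Steps 2 and 3 are correct as written: the identity $\rho^{\,p_\rho-(n-1)}=e^n\rho$, the uniformity of the Morrey constant for $p_\rho\ge n-\tfrac12$, the bound $\sup_{t\ge1}\log(e+t)\,t^{-(n-p_\rho)}\le C(n)\log(1/\rho)$, and the divergence of $\int_r^R\frac{d\rho}{\rho\log(e/\rho)}$ all check out.

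The gap is in Step 1, which you correctly identify as the delicate point but then dispatch with a justification that is not only incomplete but false as stated. The retraction argument does not ``only use'' that $P$ is $1$-Lipschitz together with $J_f\ge0$ a.e.\ and $|Df|\in L^{n-1}_{\mathrm{loc}}$. Its engine is the null-Lagrangian identity $\int_B J_{P\circ f}=\int_B J_f$ for maps with equal traces, which the paper invokes as \cite[Lemma 4.7.2]{IwaniecM} --- a statement about $W^{1,n}$ maps. Below the natural exponent that identity fails in general: $f(x)=x/|x|$ and $g(x)=x$ lie in $W^{1,p}(B,\bbbr^n)$ for every $p<n$ (so both have $|D\cdot|\in L^{n-1}_{\mathrm{loc}}$ and nonnegative Jacobians) and share the same trace on $\partial B$, yet $\int_B J_f=0\ne|B|=\int_B J_g$. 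Establishing the identity for a finite-distortion map with $Df\in L^{n}\Log^{-1}$ (and for its composition with $P$, which is \emph{not} of finite distortion) is precisely the hard content of the theory: one must show that the pointwise Jacobian is locally integrable and coincides with the distributional Jacobian below the natural exponent (Iwaniec--Koskela--Onninen, Greco; this is where the specific weight $\log^{-1}$ enters). Nor can one sidestep this by smooth approximation with fixed boundary values, since $u\mapsto\int J_u$ is not continuous under convergence of gradients in $L^n\Log^{-1}$ --- the counterexample of Section~\ref{sec:3} lives exactly at that threshold. As written, your proposal assumes the crux; Step 1 needs either a citation of the distributional-Jacobian theorem or an actual proof of it.
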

See \cite[Chapter 2]{HenclKoskela} and \cite[Chapter 7]{IwaniecM} for the case of more general Orlicz-Sobolev spaces.

In this paper we are concerned with Sobolev mappings between manifolds.

Assume $M$ and $N$ are smooth, Riemannian manifolds without boundary, with $N$ being compact.
Assume also that $N$ is 
isometrically embedded in $\bbbr^{k}$ for some $k\in\N$. Then the class of Sobolev mappings $W^{1,p}(M,N)$ 
is defined as
$$
W^{1,p}(M,N)=\left\{u\in W^{1,p}(M,\bbbr^{k})~~\colon~~f(x)\in N \text{ for a.e. }x\in M\right\}.
$$
This is a metric space, with the metric inherited from $W^{1,p}(M,\R^k)$.

If $M$ and $N$ are smooth, oriented, $n$-dimensional Riemannian manifolds without boundary, with $N$ being closed, then we say that
$f\in W^{1,n}(M,N)$ has finite distortion if $J_f(x)>0$ or $Df(x)=0$ at almost every point of $M$.
If in addition, there is a constant $K>0$ such that $|Df|^n\leq K J_f$ almost everywhere, then we say that $f$ is quasiregular.

Note that we need the manifolds to be oriented, as otherwise we would not be able to talk about positive Jacobian.

We want to emphasize that in our definitions of quasiregular mappings and mappings of finite distortion we {\em do not} assume continuity.

As we have seen, Euclidean quasiregular mappings or, more generally, $W^{1,n}$ mappings of finite distortion are continuous.

The class of quasiregular mappings and mappings of finite distortion between manifolds have been studied by many authors, see for 
example \cite{BM,HP,HP2,OP,P} and references therein. In most of the papers the target manifold is compact.
However, the authors always explicitly assume continuity of such mappings. Thus a natural question is whether 
mappings of finite distortion, as defined above, are necessarily continuous. 

\begin{question}
Assume $M$ and $N$ are two smooth, oriented, $n$-dimensional Riemannian manifolds without boundary, with $N$ being compact.
Is it true that a mapping $f\in W^{1,n}(M,N)$ of 
finite distortion is necessarily continuous (i.e. has a continuous representative)?
\end{question}

The Euclidean argument does not easily adapt to this case, because of lack  of a counterpart of the projection $P$. 
This is an essential difference, because, in fact, an analogous question for Orlicz-Sobolev mappings has a negative answer: 
there exists a mapping $f\colon \bbbs^{n}\to \bbbs^{n}$ with $Df\in L^{n}\Log^{-1}$, with positive Jacobian (and thus of finite distortion), 
that is discontinuous, see \cite{HIMO} and Section \ref{sec:3}. The existence of such a counterexample is, in essence, 
caused by the lack of a retraction that could play the role of $P$ in the proof.

However, in the case of $W^{1,n}$ mappings, the answer is in the positive. 
This shows that when the target manifold is compact,
the continuity assumption in papers cited above is redundant.
The main result of the paper reads as follows.
\begin{theorem}
\label{thm:Sob}
Let $M$ and $N$ be smooth,  oriented, $n$-dimensional Riemannian manifolds without boundary and assume additionally that $N$ is compact.
If $f\in W^{1,n}(M,N)$ has finite distortion, then $f$ is continuous. Moreover, for every compact subset $Z\subset M$ there is 
$R_o>0$ depending on $Z$, $N$ and $f$ such that $f$  satisfies the estimate
\begin{equation}
\label{uniform}
\left(\osc_{B(x,r)} f\right)^n \leq \frac{C(n)}{\log (R/r)} \int_{B(x,R)} |D f|^n
\end{equation}
provided $x\in Z$ and $0<r<R<R_o$. 
\end{theorem}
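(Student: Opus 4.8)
The plan is to prove first that $f$ has a continuous representative, and then to deduce the estimate~\eqref{uniform} by reducing it, in well chosen coordinates, to the Euclidean Theorem~\ref{thm:osc}. For continuity, fix $x_o\in M$ and work in normal coordinates on $M$ at $x_o$, so that for $\rho$ below the injectivity radius at $x_o$ the ball $B(x_o,\rho)$ is a Euclidean ball and $S(x_o,\rho)$ a round sphere. Since $|Df|^n\in L^1$, the function $g(\rho):=\int_{S(x_o,\rho)}|Df|^n\,d\sigma$ is integrable near $0$, so $\liminf_{\rho\to0}\rho g(\rho)=0$, and we may pick a decreasing sequence $\rho_j\to0$ of \emph{admissible} radii (meaning $f|_{S(x_o,\rho_j)}\in W^{1,n}$ and coincides with the trace of $f$ there, which holds for a.e.\ $\rho$) with $\rho_j g(\rho_j)\to0$. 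By the Courant--Lebesgue inequality $f|_{S(x_o,\rho_j)}$ is then continuous and $(\osc_{S(x_o,\rho_j)}f)^n\le C(n)\rho_j g(\rho_j)\to0$, so for large $j$ the connected set $f(S(x_o,\rho_j))$ lies in a geodesic ball $\hat B_j=B_N(p_j,\sigma_j)$ with $\sigma_j\to0$ below the convexity radius of $N$. The crucial new ingredient, substituting for the monotonicity argument of the Euclidean proof (which relied on the radial retraction $P$, unavailable here), is a maximum principle: \emph{if $\rho$ is admissible, $f|_{S(x_o,\rho)}$ maps into a geodesic ball $\hat B=B_N(p,\sigma)$ with $\sigma$ below the convexity radius, and the ``degree of $f$ relative to its boundary'' --- the integer $d$ in the Jacobian identity below --- vanishes, then $f(x)\in\overline{\hat B}$ for a.e.\ $x\in B(x_o,\rho)$, whence $\osc_{B(x_o,\rho)}f\le2\sigma$.}

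I would prove the maximum principle as follows. Let $G\in W^{1,n}(B(x_o,\rho),N)$ be the coning extension of $f|_{S(x_o,\rho)}$ into $\overline{\hat B}$, taken in normal coordinates on $N$ at $p$. For smooth maps with equal boundary traces one has
\begin{equation*}
\int_{B(x_o,\rho)}(\phi\circ f)\,J_f-\int_{B(x_o,\rho)}(\phi\circ G)\,J_G=d\int_N\phi\,d\Vol_N,\qquad\phi\in C^\infty(N),
\end{equation*}
with $d\in\mathbb Z$ the degree of the map $S^n\to N$ obtained by gluing $f$ and $G$ along $S(x_o,\rho)$; this persists for $W^{1,n}$ maps by strong $W^{1,n}$ approximation. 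If $d=0$ the left side vanishes; taking $\phi=\psi\ge0$ supported off a slight enlargement of $\hat B$ still containing the image of $G$, the second integral is $0$ and $J_f\ge0$ (finite distortion) forces $(\psi\circ f)J_f=0$ a.e., and letting $\psi\uparrow\mathbf 1_{N\setminus\overline{\hat B}}$ gives $J_f=0$, hence $Df=0$ (finite distortion again), a.e.\ on $A:=\{x\in B(x_o,\rho):f(x)\notin\overline{\hat B}\}$. Fixing a smooth $\Psi\ge0$ on $N$ that vanishes exactly on $\overline{\hat B}$, the function $u:=\Psi\circ f\in W^{1,n}(B(x_o,\rho))$ satisfies $\nabla u=(\nabla\Psi\circ f)Df=0$ a.e.\ on $\{u>0\}=A$ and, by the standard fact that $\nabla u=0$ a.e.\ on $\{u=0\}$, also off $A$; hence $\nabla u=0$ a.e., $u$ is constant on the connected ball $B(x_o,\rho)$, and its trace on $S(x_o,\rho)$ equals $\Psi\circ(f|_{S(x_o,\rho)})=0$, so $u\equiv0$, i.e.\ $|A|=0$.

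The hard part is to check $d=0$ for $\rho=\rho_j$, $j$ large. A direct estimate of the coning extension gives $\int_{B(x_o,\rho_j)}|DG_j|^n\le C(n)\bigl(\sigma_j^n+\rho_j g(\rho_j)\bigr)\to0$. Approximating $f$ and $G_j$ strongly in $W^{1,n}$ by smooth maps (Schoen--Uhlenbeck, applicable since $n=\dim B(x_o,\rho_j)$) and joining them across a thin collar whose energy can be made arbitrarily small (Luckhaus' lemma), one obtains a smooth map $S^n\to N$; passing to the limit, its degree $d_j\in\mathbb Z$ satisfies $d_j\Vol(N)=\int_{B(x_o,\rho_j)}J_f-\int_{B(x_o,\rho_j)}J_{G_j}$, whence
\begin{equation*}
|d_j|\,\Vol(N)\le C\!\int_{B(x_o,\rho_j)}\!|Df|^n+C\!\int_{B(x_o,\rho_j)}\!|DG_j|^n\longrightarrow0,
\end{equation*}
using $0\le J_f\le C|Df|^n$ and $|Df|^n\in L^1$. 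Since $d_j$ is an integer, $d_j=0$ for all large $j$. This step is exactly where $f\in W^{1,n}$, rather than merely $Df\in L^n\Log^{-1}$, is used: one needs both integrality of $d_j$ and localisation of the $L^1$ mass of $|Df|^n$ to small balls, the latter failing for $L^n\Log^{-1}$, consistently with the counterexample of Section~\ref{sec:3}. With $d_j=0$ the maximum principle gives $\osc_{B(x_o,\rho_j)}f\le2\sigma_j\to0$, and since $\rho\mapsto\osc_{B(x_o,\rho)}f$ is nondecreasing, $\osc_{B(x_o,\rho)}f\to0$ as $\rho\to0$ at every $x_o\in M$; equivalently, $f$ has a continuous representative.

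Finally, for~\eqref{uniform}: being continuous, $f$ is uniformly continuous near the compact set $Z$, so we may fix $R_o>0$ (depending on $Z$, $N$ and $f$), below the injectivity radius of $M$ along $Z$, so small that for every $x\in Z$ the set $f(B(x,R_o))$ lies in a geodesic ball $B_N(p,\delta)\subset N$ with $\delta$ as small as we please. For such $x$ and $0<r<R<R_o$, set $\tf:=\exp_p^{-1}\circ f\circ\exp_x$ on $B^n(0,R)\subset\bbbr^n$; then $\tf\in W^{1,n}(B^n(0,R),\bbbr^n)$ has finite distortion (both exponential maps are orientation preserving diffeomorphisms), $\exp_x^{-1}(B(x,\rho))=B^n(0,\rho)$ exactly, and $|D\tf|\le L\,(|Df|\circ\exp_x)$, $\osc_{B(x,\rho)}f\le L\,\osc_{B^n(0,\rho)}\tf$, with $L=L(n,N,Z,R_o)\to1$ as $R_o\to0$. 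Applying the Euclidean Theorem~\ref{thm:osc} to $\tf$ on $B^n(0,R)$ with inner radius $r$ and outer radius $R$, and translating back,
\begin{equation*}
\bigl(\osc_{B(x,r)}f\bigr)^n\le L^{2n}\,\frac{C(n)}{\log(R/r)}\int_{B(x,R)}|Df|^n,
\end{equation*}
which is~\eqref{uniform} provided $R_o$ is small enough that $L^{2n}$ stays within the tolerated loss in the constant. Using normal coordinates centred at $x$ is what keeps the domain balls exactly Euclidean, so no splitting of the range $r<R$ is needed.
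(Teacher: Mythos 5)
Your argument is correct in substance, but it reaches Theorem~\ref{thm:Sob} by a genuinely different route than the paper. The paper first proves a quantitative ``essential monotonicity'' statement (Proposition~\ref{l11}: under a smallness hypothesis on $\int_B J_f$, almost all of $B$ lands in $\overline{D_2}$, so $\osc_B f\le 6\osc_{\partial B}f$), then runs the Courant--Lebesgue integration directly on the manifold to get \eqref{osc1r} for the essential oscillation, and only at the very end upgrades to a continuous representative; you invert this order, proving continuity pointwise first and then transplanting the Euclidean Theorem~\ref{thm:osc} through exponential charts on \emph{both} $M$ and $N$ --- a step that is only available once continuity localizes the image of $f(B(x,R_o))$ into a single normal ball of $N$. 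The deeper difference is in the substitute for the missing radial retraction. The paper locates, by a volume count on a smooth approximation with the same trace (Bethuel), an open set $D'\subset D_3\setminus D_2$ missed by the image, builds an explicit Lipschitz retraction $R:N\setminus\overline{D'}\to\overline{D_1}$, and contradicts $\int_K J_f>0$, which is itself established by a Fubini/absolute-continuity-on-lines argument. You instead use the localized degree identity $\int_B(\phi\circ f)J_f-\int_B(\phi\circ G)J_G=d\int_N\phi$ with the coning extension as competitor, kill $d$ by the integrality-versus-small-energy argument, and close with the auxiliary function $u=\Psi\circ f$ whose gradient vanishes a.e.\ --- no retraction and no line argument. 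Both routes ultimately lean on the same density results (Schoen--Uhlenbeck on $\bbbs^n$, or Bethuel with prescribed trace) to make sense of the degree of a glued $W^{1,n}$ map, and on $L^1$-continuity of the Jacobian; your weighted version of the degree formula (with the test function $\phi$) requires exactly the same approximation machinery as the paper's unweighted one, so nothing extra is hidden there. What the paper's route buys is an explicit constant $(6\,\C_M)^n$ and a monotonicity proposition of independent interest; what yours buys is a more conceptual argument that isolates the degree identity as the only global input and replaces the retraction construction by the clean $J_f=0\Rightarrow Df=0\Rightarrow\nabla(\Psi\circ f)=0$ chain, at the mild cost of the bi-Lipschitz bookkeeping ($L\to1$ as $R_o\to0$) needed to keep the final constant dependent on $n$ alone.
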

Note that the constant in the inequality depends on $n$ only.
\begin{corollary}
If $M$ and $N$ are as in Theorem~\ref{thm:Sob}, $f\in W^{1,n}(M,N)$ and $J_{f}>0$ almost everywhere in $M$, then $f$ is continuous.
\end{corollary}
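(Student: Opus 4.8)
The plan is to localize, reduce the assertion to a single oscillation estimate on small balls, and prove that estimate by a Courant--Lebesgue argument on geodesic spheres together with a degree-theoretic ``trapping'' lemma that takes over the role of the Euclidean radial retraction $P$. We may assume $n\ge2$, since for $n=1$ a function in $W^{1,1}$ of one variable is already continuous. The claim being local, fix $x_o\in M$, a bi-Lipschitz chart identifying a neighbourhood of $x_o$ with $B^n(0,4)$ whose metric is as close to Euclidean as we wish, and a radius $\rho_N>0$ (from compactness of $N\subset\R^k$) such that every geodesic ball $\overbar{B_N(y,s)}$ with $s<\rho_N$ is geodesically convex, lies in one chart, is diffeomorphic to $\overbar{B^n}$, and has intrinsic and extrinsic distances comparable with constant tending to $1$ as $s\to0$. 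By absolute continuity of the integral there is, for a compact $Z\subset M$, some $R_o>0$ depending on $Z$, $N$ and $f$ with $\sup_{x\in Z}\int_{B(x,R_o)}|Df|^n$ as small as needed. Fix $x\in Z$ and $0<r<R<R_o$; from $\int_{B(x,R)\setminus B(x,r)}|Df|^n=\int_r^R\!\big(\int_{S(x,t)}|Df|^n\,d\sigma\big)\,dt$ and $\int_r^R dt/t=\log(R/r)$, a Fubini and pigeonhole argument produces $\rho\in(r,R)$ with $f|_{S(x,\rho)}\in W^{1,n}(S(x,\rho),\R^k)$ $N$-valued, $|D_\tau f|\le|Df|$, and $\rho\int_{S(x,\rho)}|D_\tau f|^n\,d\sigma\le\frac1{\log(R/r)}\int_{B(x,R)}|Df|^n$; since $S(x,\rho)$ has dimension $n-1<n$, Morrey's embedding (rescaled) gives a continuous representative of $f|_{S(x,\rho)}$ with $\osc_{S(x,\rho)}f\le C(n)\big(\rho\int_{S(x,\rho)}|D_\tau f|^n\,d\sigma\big)^{1/n}=:d$.

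\emph{The trapping lemma.} For a threshold $\delta_0=\delta_0(n,N)>0$ to be fixed, the key assertion is: if $d<\delta_0$ then $f(B(x,\rho))\subset\overbar{B_N(y_0,2d)}$ up to a null set, for any chosen $y_0\in f(S(x,\rho))$. Granting it, $\osc_{B(x,r)}f\le\osc_{B(x,\rho)}f\le\diam_{\R^k}\overbar{B_N(y_0,2d)}\le 4d$, so \eqref{uniform} follows with a constant depending only on $n$. To prove the assertion: $f(S(x,\rho))\subset\overbar{B_N(y_0,2d)}$ (its extrinsic, hence intrinsic, diameter is $\le(1+o(1))d$); form the geodesic cone $h\colon\overbar{B(x,\rho)}\to\overbar{B_N(y_0,2d)}$, $h(t\theta)=\gamma_\theta(t/\rho)$ with $\gamma_\theta$ the constant-speed geodesic from $y_0$ to $f(\rho\theta)$ (well defined by geodesic convexity), so $h\in W^{1,n}$ is continuous and equals $f$ on $S(x,\rho)$; and glue $f$ and $h$ along $S(x,\rho)$ to obtain $F\in W^{1,n}(S^n,N)$ on the double $S^n$ of $\overbar{B(x,\rho)}$. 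This $F$ has a well-defined integer degree (by density of smooth maps, or via $W^{1,n}\subset\mathrm{VMO}$) with $\deg(F)\,\Vol(N)=\int_{S^n}F^*\mathrm{vol}_N=\pm\big(\int_{B(x,\rho)}J_f-\int_{B(x,\rho)}J_h\big)$. Here $0\le\int_{B(x,\rho)}J_f\le\int_{B(x,R)}|Df|^n$ is as small as we like; and, writing $\mathrm{vol}_N$ as an exact form on the contractible ball $B_N(y_0,3d)$ with a primitive $\alpha$ of size $\lesssim d$, Stokes' theorem gives $\int_{B(x,\rho)}J_h=\int_{B(x,\rho)}h^*\mathrm{vol}_N=\int_{S(x,\rho)}f^*\alpha$, whence $|\int_{B(x,\rho)}J_h|\le C(n)\,d\,\big(\rho\int_{S(x,\rho)}|D_\tau f|^n\big)^{(n-1)/n}$, which is small once $d$ is small and $\rho\int_{S(x,\rho)}|D_\tau f|^n$ is bounded — and the latter may be assumed, since otherwise \eqref{uniform} holds trivially ($\osc_{B(x,r)}f\le\diam_{\R^k}N$). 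So $|\deg F|<1$, i.e.\ $\deg F=0$. Now for $\phi\in C^\infty(N)$ one has $\int_{S^n}(\phi\circ F)\,F^*\mathrm{vol}_N=\deg(F)\int_N\phi\,\mathrm{vol}_N$; choosing $\phi\ge0$ supported off $\overbar{B_N(y_0,2d)}$ makes the right side vanish while $\phi\circ h\equiv0$, so $\int_{B(x,\rho)}(\phi\circ f)\,J_f=0$, and $J_f\ge0$ (finite distortion!) together with $\phi\ge0$ forces $(\phi\circ f)J_f=0$ a.e.; letting $\phi\uparrow\mathbf{1}_{N\setminus\overbar{B_N(y_0,2d)}}$ yields $J_f=0$, hence $Df=0$, a.e.\ on $\{f\notin\overbar{B_N(y_0,2d)}\}$. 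Taking Lipschitz $\psi\colon N\to[0,\infty)$ with $\psi^{-1}(0)=\overbar{B_N(y_0,2d)}$ and $w=\psi\circ f$, we have $w\in W_0^{1,n}(B(x,\rho))$ (its trace vanishes since $f(S(x,\rho))\subset\psi^{-1}(0)$) and $Dw=0$ a.e.\ — on $\{f\notin\overbar{B_N(y_0,2d)}\}$ because $Df=0$, on $\{w=0\}$ because the gradient of a Sobolev function vanishes a.e.\ on its level sets — hence $w\equiv0$, which is the trapping.

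\emph{From the lemma to the theorem.} Shrinking $R_o$ so that the lemma applies at the scales $r_j=e^{-j}R_o$ with outer radius $r_{j-1}$ (there $\log(r_{j-1}/r_j)=1$, so $d\le C(n)(\int_{B(x_o,r_{j-1})}|Df|^n)^{1/n}<\delta_0$ and the two smallness requirements hold), we get $\osc_{B(x_o,r_j)}f\le 4C(n)(\int_{B(x_o,r_{j-1})}|Df|^n)^{1/n}\to0$; thus the precise representative of $f$ is continuous at $x_o$, and $x_o$ being arbitrary, $f$ is continuous, hence uniformly continuous near $Z$. Shrinking $R_o$ once more so that, for $x\in Z$, this uniform continuity forces $\osc_{B(x,2R_o)}f<\delta_0$ and so that $\sup_{x\in Z}\int_{B(x,R_o)}|Df|^n$ meets the smallness requirements, the trapping lemma applies for all $x\in Z$ and $0<r<R<R_o$ (since $d=\osc_{S(x,\rho)}f\le\osc_{B(x,\rho)}f<\delta_0$), giving \eqref{uniform} with a constant depending only on $n$, all implicit geometric constants tending to their Euclidean values as $R_o\to0$. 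The Corollary follows since $J_f>0$ a.e.\ is a particular case of finite distortion.

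\emph{Main obstacle.} The crux is the trapping lemma: with no analogue of the retraction $P$ onto a ball in $N$, one must instead exploit orientability and compactness of $N$ through the degree of the glued sphere map $F\colon S^n\to N$; the delicate points are that $\deg F$ must vanish — which is why $R_o$ is allowed to depend on $f$ and $N$, and where the Stokes identity $\int_{B(x,\rho)}J_h=\int_{S(x,\rho)}f^*\alpha$ replaces the retraction in making the ``filling'' Jacobian small — and that the positivity $J_f\ge0$ from finite distortion is precisely what then keeps the image trapped. Once the lemma is available, the Courant--Lebesgue estimate and the bootstrap to continuity are comparatively routine.
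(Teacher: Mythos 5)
Your write-up is correct in substance, but note that in the paper this corollary is a one-line consequence of Theorem~\ref{thm:Sob}: $J_f>0$ a.e.\ is a special case of finite distortion, so nothing further is required. What you have produced is in effect an independent proof of Theorem~\ref{thm:Sob} itself, and it shares the paper's overall architecture (localization by charts, Courant--Lebesgue pigeonholing of a good sphere, Morrey on that sphere, a trapping statement confining the image of the ball near the image of its boundary, then the $dt/t$ integration and the upgrade from essential oscillation to a continuous representative). Where you genuinely diverge is in the trapping step, the analogue of Proposition~\ref{l11}. The paper argues by contradiction: a line-and-Fubini argument gives $\int_K J_f>0$ on the escaping set, Bethuel's approximation with fixed trace replaces $f$ by a smooth $\tf$ preserving both integral conditions, the volume bound forces $\tf$ to miss an open set $D'$, a Lipschitz retraction $R\colon N\setminus\overbar{D'}\to\overbar{D_1}$ is built by hand, and the glued map $\tf\cup(R\circ\tf)$ on $\bbbs^n$ has degree zero by non-surjectivity. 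You instead glue $f$ directly with the geodesic cone $h$ over its boundary trace, show $|\deg F|<1$ (hence $\deg F=0$) quantitatively --- $\int J_f$ small by hypothesis and by the sign condition, $\int J_h$ small via Stokes with a primitive of size $O(d)$ --- and then test the degree identity against $\phi\,\mathrm{vol}_N$ with $\phi\ge 0$ supported off the small ball; positivity of $J_f$ forces $J_f=0$, hence $Df=0$ by finite distortion, off that ball, and the $w=\psi\circ f\in W^{1,n}_0$ level-set argument completes the trapping. This buys you: no Bethuel approximation with prescribed boundary values, no explicit retraction, and no line argument. The price is that you need the well-defined integer degree and the identity $\int_{\bbbs^n}F^*(\phi\,\mathrm{vol}_N)=\deg(F)\int_N\phi\,\mathrm{vol}_N$ for a priori \emph{discontinuous} maps in $W^{1,n}(\bbbs^n,N)$ (the paper only invokes these for a continuous glued map), together with the $W^{1,n}$ bound for the cone; both follow from Schoen--Uhlenbeck density and are standard, so this is a legitimate, arguably more streamlined route rather than a gap. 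The remaining compressions (passing from essential oscillation to genuine continuity, the two-pass choice of $R_o$ so the trapping hypotheses hold at every scale $r<R<R_o$) correspond to steps the paper also carries out and are routine.
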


Another motivation behind Theorem~\ref{thm:Sob} stems from
a result Hornung and Velcic \cite{HV} who proved continuity of the Gauss map for a $W^{2,2}$-isometric immersion of a $2$-dimensional surface
with smooth positive Gaussian curvature. In a forthcoming paper we will show applications of Theorem~\ref{thm:Sob} to
regularity of higher dimensional Sobolev isometric immersions, see~\cite{GHP}.

One can further ask whether Theorem~\ref{thm:Sob} generalizes to the case of mappings into non-compact manifolds, but, in general, the 
answer is in the negative: the function $f(x)=\log|\log x|$ defines a mapping from a ball in $\bbbr^n$ into the graph of $f$. This mapping
is in $W^{1,n}$, has positive Jacobian, but it is discontinuous at the origin.

The paper is organized in the following way: in Section \ref{sec:1} we present a proof of Theorem~\ref{thm:osc}. 
In Section \ref{sec:2} we prove Theorem \ref{thm:Sob}. Section \ref{sec:3} is devoted to discontinuous Orlicz-Sobolev mappings with 
positive Jacobian. Finally, Section \ref{sec:5} is an appendix with some technical results needed in the paper. 

Notation used in the paper is pretty standard. By $C$ we will denote a generic constant whose value may change within
a single string of estimates. By writing $C(n)$ we indicate the the constant depends on $n$ only.
The Lebesgue measure of a set $A\subset\bbbr^n$ is denoted by $|A|$, while the Lebesgue measure of a subset $A$ of a Riemannian manifold 
will be denoted by $\Vol(A)$. By $\omega_n$ we denote the measure of an $n$-dimensional Euclidean unit ball, therefore the volume of an $(n-1)$-dimensional unit sphere equals $n\omega_n$.
Whenever we consider a restriction of a Sobolev mapping to the boundary of a ball, we mean 
the restriction in the sense of traces. The spaces of continuous and H\"older continuous functions with exponent $\alpha$ will be denoted by $C^0$ and $C^{0,\alpha}$ respectively. The norm of the derivative $|Df|$ is always understood as the operator norm, that is
$|Df(x)|=\sup_{|\xi|=1}|Df(x)\xi|$. The symbol $\lesssim$, used in Section \ref{sec:5}, stands for an inequality up to a constant dependent on the dimension $n$ only.

\section{Euclidean case}
\label{sec:1}

In this section we will sketch the proof of Theorem~\ref{thm:osc}. While the result is known, it is important to see a sketch here, because
it will allow us to understand in what ways the proof of Theorem~\ref{thm:Sob} is different and in what ways it is similar to that of Theorem~\ref{thm:osc}.
Our argument will be somewhat sketched, but the missing details will be explained in the course of the proof of Theorem~\ref{thm:Sob}.

The most important step in the proof is the observation that mappings of finite distortion have the following property:

\begin{equation}
\label{mono}
\osc_{B(x,r)} f\leq 2\osc_{S(x,r)} f
\qquad
\text{for every $x\in\Omega$ and almost all }0<r<\dist (x,\Omega^c).
\end{equation}

Since $W^{1,n}$ functions are H\"older continuous on almost all spheres, the oscillation of $f$ on $S(x,r)$ is understood in the classical
sense: $\osc_{S(x,r)} f=\diam f(S(x,r))$. However, since we do not know yet that $f$ is continuous on $\Omega$, the oscillation of $f$ on the ball is 
understood as the essential oscillation
$$
\osc_{B(x,r)} f = \inf\big\{ \diam f(A):\, \text{$A$ is a full measure subset of $B(x,r)$}\big\}.
$$
In fact, we will prove that if $\overbar{B}$ is a closed ball that contains $f(S(x,r))$ and 
$\diam f(\overbar{B})\leq 2\diam f(S(x,r))$, then
$$
|f^{-1}(\bbbr^n\setminus\overbar{B})\cap B(x,r)|=0,
$$  
that is, almost all points of $B(x,r)$ are mapped into $\overbar{B}$.

Suppose to the contrary that the set
$$
K=f^{-1}(\bbbr^n\setminus\overbar{B})\cap B(x,r)
$$
has positive measure. The mapping $f$ is not constant on $K$, thus the derivative $Df$
cannot be equal zero almost everywhere in $K$ (for a detailed argument see the proof of \eqref{Jf over K}). 
Since $f$ is a mapping of finite distortion, it follows that
$J_f>0$ on a subset of $K$ of positive measure and hence 
$$
\int_K J_f >0.
$$
Let now $P$ be the retraction $P:\bbbr^n\to \overbar{B}$ as described in the Introduction.
Since the Jacobian of the mapping $P\circ f$ equals zero on the set $K$ (because this set is mapped into
an ($n-1$)-dimensional sphere) and $J_{P\circ f}=J_f$ on $B(x,r)\setminus K$, we conclude that
$$
\int_{B(x,r)} J_{P\circ f}< \int_{B(x,r)} J_f.
$$
On the other hand, $f=P\circ f$ on the boundary of the ball $B(x,r)$ (since the boundary is mapped into $\overbar{B}$)
and $W^{1,n}$ mappings that have the same boundary values have equal integrals of the Jacobian \cite[Lemma 4.7.2]{IwaniecM}, so
$$
\int_{B(x,r)} J_{P\circ f}= \int_{B(x,r)} J_f,
$$
which is a contradiction. This proves \eqref{mono}.

By Fubini's theorem, the restriction of $f$ to almost all spheres $S(x_0,r)$  belongs to $W^{1,n}(S(x_0,r),N)$. This and
Morrey's inequality (see Appendix) imply that
$$
\osc_{S(x_0,t)} f\leq C(n)\Bigg( t \int_{S(x_0,t)}|Df|^n\Bigg)^{1/n}
\qquad
\text{for almost all $0<t<R$.}
$$
Hence \eqref{mono} yields
\begin{align*}
& C(n)\int_{B(x_0,R)\setminus B(x_0,r)} |Df|^n  \geq
\int_r^R \left(\osc_{S(x_0,t)} f\right)^n\, \frac{dt}{t} \\ 
& \ \ \geq
2^{-n} \int_r^R \left(\osc_{B(x_0,t)} f\right)^n\frac{dt}{t} \geq
2^{-n}\log(R/r)\left(\osc_{B(x_0,r)} f\right)^n,
\end{align*}
which proves the result.

\section{$W^{1,n}$ mappings between manifolds}
\label{sec:2}

In this section we prove Theorem \ref{thm:Sob}.
Since the result is local in nature, we will state it first under the assumption that $M=\bbbb^n$ is a Euclidean ball (of any radius). Then we will show how the general case of Theorem \ref{thm:Sob} follows from this particular one. 

Recall that the injectivity radius of $N$, denoted by $d_N$, is the supremum of $r>0$ such that
for any $p\in N$, the exponential map $\exp_p:T_p N\to N$ is a diffeomorphism of the ball 
$B(0,r)\subset T_p N$ onto $B(p,r)\subset N$  (for more information about the
exponential map, see \cite{Jost}).

\begin{theorem}
\label{from ball}
Let $N$ be a smooth, compact and oriented, $n$-dimensional Riemannian manifold without boundary. 
Let $d_N$ be the radius of injectivity of $N$ and let $\C_M=(n-1)\pi (n\omega_n)^{-1/n}$
be the constant in Morrey's inequality (Corollary~\ref{cor:morrey}).
Let $\bbbb^n\subset\bbbr^n$ be a Euclidean ball of any radius.
If $f\in W^{1,n}(\bbbb^n,N)$ has finite distortion and satisfies the estimate
$$
\int_{\bbbb^n} |Df|^n<\min\{ \A_N,\B_N\},
$$
where
$$
\A_N =\frac{1}{2}\left(\frac{d_N}{60\C_M}\right)^n\, ,
\quad
\B_N=\inf\{ \Vol(D):\, \text{$D$ is a ball in $N$ of radius $d_N/10$}\},
$$
then $f$ has a continuous representative which satisfies the estimate
\begin{equation}
\label{osc1r}
\left( \osc_{B(x_{o},r)} f\right)^{n} \leq \frac{(6\,\C_M)^n}{\log(R/r)}\int_{B(x_{o},R)} |D f|^{n}\, ,
\end{equation}
provided $B(x_{o},2R)\subset \bbbb^n$ and $0<r<R$. 
\end{theorem}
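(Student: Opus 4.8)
The plan is to mimic the Euclidean proof of Theorem~\ref{thm:osc}, with the radial retraction $P$ replaced by the nearest-point projection onto geodesic balls in $N$. The crucial inequality to establish is the manifold analogue of \eqref{mono}, namely that for almost every radius $t$,
$$
\osc_{B(x,t)} f \leq C \osc_{S(x,t)} f,
$$
and once this is in hand, the final estimate \eqref{osc1r} follows exactly as in Section~\ref{sec:1} by integrating over $t\in(r,R)$ against $dt/t$, using Fubini to restrict $f$ to almost all spheres and Morrey's inequality (Corollary~\ref{cor:morrey}) on each sphere. The precise bookkeeping of the constant $(6\,\C_M)^n$ will come out of chasing the factor $2$ in \eqref{mono} together with the factor $\C_M$ in Morrey and the radial rescaling; I expect this to be routine once the geometry is set up.

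The heart of the matter is the following: fix $x_o$ with $B(x_o,2R)\subset\bbbb^n$ and fix $t<R$. First I would use the smallness hypothesis $\int_{\bbbb^n}|Df|^n<\A_N$ together with Morrey's inequality on the sphere $S(x_o,t)$ to conclude that $\diam f(S(x_o,t))$ is much smaller than the injectivity radius $d_N$ — say smaller than $d_N/30$ — so that $f(S(x_o,t))$ is contained in a geodesic ball $D\subset N$ of radius, say, $d_N/10$ on which the nearest-point (geodesic) retraction $P\colon N'\to\overbar D$, with $N'$ a neighborhood of $\overbar D$, is well-defined and smooth, and maps $N'\setminus D$ onto $\partial D$. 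This $P$ plays the role of the radial retraction. Then I would argue by contradiction: if the set $K=f^{-1}(N\setminus\overbar D)\cap B(x_o,t)$ had positive measure, then $f$ is non-constant on $K$, so $Df\neq 0$ on a positive-measure subset of $K$, hence by finite distortion $J_f>0$ there and $\int_K J_f>0$. Now compare $f$ with $P\circ f$: on $S(x_o,t)$ they agree (the sphere maps into $\overbar D$), $J_{P\circ f}=J_f$ on $B(x_o,t)\setminus K$, and $J_{P\circ f}=0$ a.e.\ on $K$ since $K$ is mapped into the $(n-1)$-dimensional set $\partial D$; therefore $\int_{B(x_o,t)}J_{P\circ f}<\int_{B(x_o,t)}J_f$. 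But $P\circ f$ and $f$ have the same trace on $S(x_o,t)$ and both lie in $W^{1,n}$, so a degree/Jacobian argument forces $\int_{B(x_o,t)}J_{P\circ f}=\int_{B(x_o,t)}J_f$ — a contradiction. Hence almost every point of $B(x_o,t)$ maps into $\overbar D$, and since $\diam \overbar D$ is comparable to $\osc_{S(x_o,t)}f$ (here I need $\overbar D$ chosen not much bigger than necessary, using that geodesic balls of small radius have diameter comparable to twice the radius), this yields the oscillation inequality.

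There are three points that require genuine care, and I expect the main obstacle to be the equality of Jacobian integrals for maps into $N$ with equal boundary traces. In the Euclidean case this is \cite[Lemma 4.7.2]{IwaniecM}, but here both $f$ and $P\circ f$ take values in $N\subset\bbbr^k$, so one must either work with the pullback $f^*\omega$ of the volume form $\omega$ of $N$ (extended suitably to a neighborhood in $\bbbr^k$) and apply Stokes/integration by parts for $W^{1,n}$ forms, or localize inside a single coordinate chart of $N$ — which is legitimate precisely because the smallness hypothesis $\int|Df|^n<\B_N$ ensures (via an argument like the one above, or a connectedness/measure argument) that the image $f(\bbbb^n)$, or at least the relevant pieces, cannot cover too much of $N$. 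The roles of $\A_N$ and $\B_N$ are exactly to make these localizations work: $\A_N$ controls oscillations on spheres through Morrey so that $f(S(x_o,t))$ sits in a small geodesic ball where $P$ exists, and $\B_N$ guarantees via the volume bound that $f$ cannot be surjective onto a $d_N/10$-ball, which is what lets the degree argument run in a chart. Secondly, the claim that $K$ positive measure forces $Df\neq0$ on a positive-measure subset of $K$ is the assertion $\int_K J_f>0$ referenced as \eqref{Jf over K} in the paper — I would prove it from the fact that a $W^{1,n}$ map which is constant a.e.\ on a set has zero gradient a.e.\ there, combined with finite distortion. Thirdly, one must be careful that ``almost all spheres'' statements from Fubini are compatible with the a.e.-$t$ validity of the oscillation inequality, so that the final integration is justified; this is handled exactly as in Section~\ref{sec:1}.
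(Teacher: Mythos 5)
Your overall strategy follows the Euclidean skeleton correctly (oscillation on balls controlled by oscillation on spheres, then integration against $dt/t$ with Morrey's inequality), but there is a genuine gap at the central step: the retraction. You propose a nearest-point projection $P\colon N'\to\overbar{D}$ defined on a neighborhood $N'$ of $\overbar{D}$, collapsing $N'\setminus D$ onto $\partial D$. This object cannot do the job. On the bad set $K=f^{-1}(N\setminus\overbar{D})\cap B$ the values of $f$ may lie anywhere in $N$, far outside any neighborhood of $\overbar{D}$, so $P\circ f$ is undefined exactly where you need it. And $P$ cannot be extended to all of $N$: a map $N\to\overbar{D}$ equal to the identity on $\overbar{D}$ and sending $N\setminus D$ into $\partial D$ would restrict to a retraction of the compact manifold-with-boundary $N\setminus D$ onto its boundary sphere, which is impossible (for $N=\bbbs^n$ this is Brouwer; in general, $[\partial D]$ is null-homologous in $N\setminus D$ but not in $\partial D$). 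This is precisely the ``lack of a counterpart of the projection $P$'' that the introduction identifies as the essential difficulty of the manifold case.

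The paper's resolution, absent from your proposal, has two ingredients. First, one replaces $f$ on $B$ by a smooth map $\tf$ with the same trace (Bethuel's approximation) \emph{before} estimating the volume of the image: a general $W^{1,n}$ map need not satisfy Lusin's condition (N), so smallness of $\int_B J_f$ does not bound $\Vol(f(B))$ — the essential image could even be all of $N$. Second, the bound $\Vol(\tf(B))\le\int_B J_{\tf}<\Vol(D_3\setminus D_2)$ produces a small ball $D'\subset D_3\setminus D_2$ \emph{missed} by $\tf(\overbar{B})$, and one builds a Lipschitz retraction $R$ of $N\setminus\overbar{D'}$ onto $\overbar{D_1}$ (collapse $N\setminus D_3$ to a point and retract along meridians of a sphere); the Jacobian-integral identity then follows by gluing $\tf$ and $R\circ\tf$ into a continuous, non-surjective, hence degree-zero map $\bbbs^n\to N$. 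Your two suggested substitutes do not work as stated: the volume form of a closed manifold is not exact, so ``pullback plus Stokes'' fails exactly when the degree could be nonzero, and a small-volume image need not lie in a single chart, so chart localization is unavailable. Two smaller points: your justification of $\int_K J_f>0$ via ``$Df=0$ a.e.\ on $K$ implies $f$ constant on $K$'' is false for a general measurable set $K$ (the paper instead uses absolute continuity on lines, which must traverse the gap between $D_2$ and $D^\lambda$ while inside $K$); and passing from the a.e.-in-$(x,r)$ essential-oscillation bound to an actual continuous representative requires the null-set and density argument at the end of the paper's proof, which the sketch in Section~\ref{sec:1} does not supply.
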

Note that since the manifold $N$ is compact, the constant $\B_N$ is positive.

Before we prove Theorem~\ref{from ball}, we will show how Theorem~\ref{thm:Sob} follows from it.

Balls in Riemannian manifolds are defined with respect to the Riemannian distance so, by the definition, all balls are geodesic.

For every $x\in M$, the exponential map $\exp_x:T_x M\to M$ is a diffeomorphism of a neighborhood of $0\in T_x M$
onto a neighborhood of $x\in M$, say $\exp_x$ maps a ball $B(0,r_x)$ in $T_x M$ in a diffeomorphic way onto the ball $B(x,r_x)$ in $M$.
By the definition of the exponential map, for every $0<r<r_x$, $\exp_x$ maps $B(0,r)$ onto $B(x,r)$.

Since the derivative $D_x\exp_x:T_x M\to T_x M$, is the identity map, $\exp_x$ restricted to a small neighborhood of $0\in T_x M$ is bi-Lipschitz with the bi-Lipschitz constant $2$. Let $r_x>0$ be the largest radius such that $\exp_x$ restricted to $B(0,r_x)$ is $2$-bi-Lipschitz.

Given a  compact set $Z\subset M$, let
$R_1=\inf\{ r_x: x\in Z\}$. Clearly $R_1>0$ and for any $x\in Z$,
$\exp_x:B(0,R_1)\to B(x,R_1)$ is a $2$-bi-Lipschitz diffeomorphism. Hence
$f\circ\exp_x\in W^{1,n}(B(0,R_1),N)$ and
$$
\int_{B(0,r)} |D (f\circ\exp_x)|^n\leq \tilde{C}(n) \int_{B(x,r)} |Df|^n\quad\text{ for all }0<r<R_1.
$$
Note that since $T_x M$ is equipped with the Euclidean structure, we can identify it with $\bbbr^n$, so $B(0,R_1)$ is a Euclidean ball.

Since $\sup_{x\in Z}\Vol(B(x,r))\to 0$ as $r\to 0$, we have that $R_2>0$, where
$$
R_2=\sup\Big\{ r>0:\, \Big( \tilde{C}(n) \int_{B(x,r)} |Df|^n\Big)\leq\frac{1}{2}\min \{ \A_N,\B_N\}\text{ for all }x\in Z\,\Big\}.
$$
Let $R_3=\min \{ R_1,R_2\}$ and $R_0=R_3/2$. Then for any $x\in Z$, the function
$f\circ\exp_x\in W^{1,n}(B(0,R_3),N)$ satisfies the assumptions of Theorem~\ref{from ball}.
Hence if $x\in Z$ and $0<r<R<R_0$, we have
\begin{align*}
\left(\osc_{B(x,r)} f\right)^n
 =&
\left(\osc_{B(0,r)}(f\circ\exp_x)\right)^n 
\leq
\frac{(6\C_M)^n}{\log(R/r)} \int_{B(0,R)} |D(f\circ\exp_x)|^n \\
& \leq \frac{\tilde{C}(n) (6\C_M)^n}{\log(R/r)}\int_{B(x,R)} |Df|^n\,,
\end{align*}
so Theorem~\ref{thm:Sob} follows. Note that $B(0,2R)\subset B(0,R_3)$ so we could apply Theorem~\ref{from ball}.
Therefore, it remains to prove Theorem~\ref{from ball}.

\begin{proof}[Proof of Theorem~\ref{from ball}]

The exponential map $\exp_p:T_p N\to N$ is a diffeomorphism of the ball 
$B(0,d_N)\subset T_p N$ onto $B(p,d_N)\subset N$. Since $T_pN$ is isometric to $\bbbr^n$, we can assume that the exponential map is defined on $\bbbr^n$.
In particular, if we fix $p\in N$, then for any $0<r<d_N$, the inverse of the exponential map $\exp_p^{-1}$ maps 
$B(p,r)$ onto the Euclidean ball $B(0,r)$ in $\bbbr^n$.

The next result is a crucial step in the proof of Theorem \ref{from ball}. It is a counterpart of property 
\eqref{mono}. Recall that a $W^{1,n}$ function on an ($n-1$)-dimensional sphere has a $C^{0,1/n}$-H\"older continuous representative, see
Section~\ref{sec:mor}.

\begin{proposition}
\label{l11}
Let $B\subset \bbbb^n$ be a ball such that 
\begin{itemize}
\item[(1)] $f|_{\partial B}\in C^{0,1/n}\cap W^{1,n}$,
\item[(2)] $d=\diam f(\partial B)<d_N/10$.
\end{itemize}
Let $D_1$, $D_2$, $D_3$ be concentric balls in $N$ of radii $2d$, $3d$ and $d_N/2$ such that ${f(\partial B)\subset D_1}$.
If 
\begin{equation}
\label{349}
\int_B J_f<\Vol (D_3\setminus D_2),
\end{equation}
then a subset of $B$ of full measure is mapped into $\overbar{D_2}$.That is, the essential oscillation of $f$ on $B$ satisfies
$$
\osc_B f\leq 6\osc_{\partial B} f.
$$
\end{proposition}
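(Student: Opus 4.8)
The plan is to run the Euclidean argument of Section~\ref{sec:1} with $f|_{\partial B}$ in place of $f|_{S(x,r)}$, the real work being to manufacture a substitute for the radial retraction $P\colon\bbbr^n\to\overbar B$. Suppose, for contradiction, that
$$
K=f^{-1}(N\setminus\overbar{D_2})\cap B
$$
has positive measure. As in the proof of \eqref{Jf over K}, $f$ cannot be essentially constant on $K$, so $Df\neq 0$ on a positive–measure subset of $K$; since $f$ has finite distortion, $\{Df=0\}$ and $\{J_f=0\}$ agree up to a null set, so $J_f>0$ on a positive–measure subset of $K$ and hence $\int_K J_f>0$.

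Next I would build a Lipschitz retraction $\rho\colon N\to\overbar{D_2}$ using the concentric balls $D_2\subset D_3$ and normal coordinates $\exp_p^{-1}$ centred at their common centre $p$ (legitimate since $d_N/2<d_N$, so $\overbar{D_3}$ lies in a normal coordinate ball). Put $\rho=\mathrm{id}$ on $\overbar{D_2}$; on the buffer annulus $D_3\setminus D_2$ let $\rho$ be the radial map $x\mapsto\exp_p\!\big(\psi(|x|)\,x/|x|\big)$ in these coordinates, where $\psi$ decreases linearly from $\psi(3d)=3d$ to $\psi(d_N/2)=0$; and set $\rho\equiv p$ on $N\setminus D_3$. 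The three pieces agree on the overlaps $\partial D_2$ and $\partial D_3$, so $\rho$ is Lipschitz; it is the identity on $\overbar{D_2}$, maps $N\setminus\overbar{D_2}$ into $\overbar{D_2}$, and because on $D_3\setminus D_2$ it collapses radially and outside $D_3$ it is constant, it satisfies
$$
J_\rho=1 \ \text{ on } D_2, \qquad J_\rho\le 0 \ \text{ a.e. on } N\setminus\overbar{D_2}.
$$
This $\rho$ is the manifold counterpart of $P$; it exists precisely because we only need it to be the identity on the \emph{small} ball $\overbar{D_2}$, which leaves the annulus $D_3\setminus D_2$ free as a place to absorb $N\setminus\overbar{D_2}$.

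With $\rho$ in hand the computation parallels Section~\ref{sec:1}. Since $\rho$ is Lipschitz, $\rho\circ f\in W^{1,n}(B,N)$, and $\rho\circ f=f$ on $\partial B$ because $f(\partial B)\subset D_1\subset\overbar{D_2}$. Using $J_f=0$ a.e.\ on $f^{-1}(\partial D_2)$ (whose image is $(n-1)$–dimensional) together with the displayed properties of $J_\rho$,
$$
\int_B J_{\rho\circ f}=\int_B (J_\rho\circ f)\,J_f=\int_{f^{-1}(D_2)}J_f+\int_K (J_\rho\circ f)\,J_f\le\int_B J_f-\int_K J_f<\int_B J_f .
$$
Now hypothesis~\eqref{349} enters. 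By the area formula $\int_B J_f=\int_N\mathcal{N}(f,B,q)\,d\omega_N(q)$ with $\mathcal{N}$ the Banach indicatrix, so \eqref{349} gives $\int_{D_3\setminus D_2}\mathcal{N}\,d\omega_N<\Vol(D_3\setminus D_2)$; combined with Lusin's condition (N) for $W^{1,n}$ mappings of finite distortion (so that the essential image of $f|_B$ has volume at most $\int_B J_f$; see the Appendix), this produces a point $q_0\in D_3\setminus\overbar{D_2}$ from which $f$ is essentially bounded away. Then both $f$ and $\rho\circ f$ take values in $N\setminus\{q_0\}$, on which the volume form is exact, $\omega_N=d\theta$ with $\theta$ smooth and bounded near the image of $f$; the Sobolev form of Stokes' theorem — the counterpart of \cite[Lemma 4.7.2]{IwaniecM} invoked in Section~\ref{sec:1} — gives
$$
\int_B J_{\rho\circ f}=\int_{\partial B}(\rho\circ f)^{*}\theta=\int_{\partial B}f^{*}\theta=\int_B J_f ,
$$
contradicting the previous display. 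Hence $|K|=0$, almost every point of $B$ maps into $\overbar{D_2}$, and since $\overbar{D_2}$ is the closure of a geodesic ball of radius $3d=3\,\osc_{\partial B}f$ we get $\osc_B f\le\diam\overbar{D_2}\le 6\,\osc_{\partial B}f$.

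The hard part is exactly the construction and use of $\rho$. Unlike in $\bbbr^n$, on a closed manifold there is no retraction onto a geodesic ball with non‑positive Jacobian on the complement unless one (i) allows a buffer region — which is why the statement carries three radii rather than one — and (ii) removes a point $q_0$ from $N$, so that $\omega_N$ becomes exact and the ``equal integrals of the Jacobian'' principle survives. The smallness assumption \eqref{349} is precisely what makes $q_0$ available; when no such bound is present (e.g.\ for Orlicz–Sobolev maps) the scheme collapses, in keeping with the counterexample mentioned in the Introduction. The one remaining technical point is the passage from ``$\mathcal{N}(f,B,q_0)=0$ for a suitable $q_0$'' to ``$f$ essentially bounded away from $q_0$'', which is where Lusin's condition (N) for finite‑distortion maps is used.
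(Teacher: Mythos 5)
Your overall architecture --- a Lipschitz folding retraction $\rho\colon N\to\overbar{D_2}$ with $J_\rho\le 0$ off $\overbar{D_2}$, plus a Stokes identity for $f^*\omega_N$ after deleting a point $q_0$ so that $\omega_N$ becomes exact --- is a legitimate alternative to the paper's retraction $R\colon N\setminus\overbar{D'}\to\overbar{D_1}$ and its degree argument on $\bbbs^n$. But the step where you produce $q_0$ is a genuine gap, and it is exactly the difficulty that the paper's Lemma~\ref{l2} is designed to remove. First, condition (N) for a possibly discontinuous $W^{1,n}$ mapping of finite distortion is not an off-the-shelf fact (the Appendix contains nothing of the sort), and the known proofs of (N) for finite-distortion maps go through the continuity/monotonicity that is being established here, so invoking it is dangerously close to circular. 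Second, and more seriously, even granting the area formula with equality, the conclusion ``$\mathcal{N}(f,B,q)=0$ for a positive-measure set of $q$'' only says that certain point preimages lie in a null set; it does not produce a point $q_0$ and a $\delta>0$ with $|f^{-1}(B(q_0,\delta))\cap B|=0$. A wild $W^{1,n}$ map can in principle have every open subset of $N$ with positive-measure preimage while $\int_B J_f$ stays small, because positive-measure preimages carry no Jacobian mass where $Df=0$, which finite distortion permits. Without such a $\delta$, the primitive $\theta$ with $d\theta=\omega_N$ on $N\setminus\{q_0\}$ is unbounded on the essential image of $f$, $f^*\theta$ need not be integrable, and your identity $\int_B J_f=\int_{\partial B}f^{*}\theta$ cannot be justified. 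The paper circumvents all of this by first replacing $f$ with a smooth $\tf$ having the same trace (Bethuel's approximation), for which $\Vol(\tf(\overbar{B}))\le\int_B J_{\tf}<\Vol(D_3\setminus D_2)$ holds trivially, so that an open set $D'\Subset D_3\setminus D_2$ is genuinely missed by the compact image; your proof needs an equivalent regularization step before the retraction/Stokes machinery can be run.

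A smaller point: the opening claim ``$f$ cannot be essentially constant on $K$, so $Df\ne0$ on a positive-measure subset of $K$'' is not a valid implication for an arbitrary measurable $K$ --- a Sobolev function can be non-constant on a fat Cantor set on which its derivative vanishes almost everywhere. The correct argument is the line/Fubini argument the paper gives for \eqref{Jf over K}: on almost every line $\ell$ meeting $L=B\cap f^{-1}(N\setminus D^\lambda)$, the curve $f|_{\ell\cap\overbar{B}}$ must traverse the annulus $D^\lambda\setminus\overbar{D_2}$ while the parameter stays in $K$, so $\int_{\ell\cap K}|\partial f/\partial x_1|\ge 2\lambda>0$, and Fubini then gives a positive-measure subset of $K$ on which $Df\ne0$. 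If you intend to rely on that argument you should say so explicitly; the shortcut as written is false.
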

\begin{proof}
Suppose to the contrary that the set  $K=B\cap f^{-1}(N\setminus \overbar{D_2})$ has positive measure. We claim that 
\begin{equation}\label{Jf over K}
\int_K J_f>0.
\end{equation}
Let $D^\lambda$, $\lambda>0$, be the ball concentric with $D_2$, of radius $3d+\lambda$. If $\lambda>0$ is 
sufficiently small, then the set $L=B\cap f^{-1}(N\setminus D^\lambda)$ also has positive measure.

Take $\ell$ to be a line in $\R^n$ such that
\begin{itemize}
\item[(1)] $\ell$ is parallel to the $x_1$ axis,
\item[(2)] $\ell\cap L\neq \varnothing$,
\item[(3)] $f$ is absolutely continuous on $\ell\cap \overbar{B}$ (see Corollary \ref{cor:12}).
\end{itemize}
The setting is presented in Figure \ref{fig:1}.

\begin{figure}[ht]
\begin{tikzpicture}[scale=0.57,>=latex']

\filldraw[fill=lightgray!50!white] (4,0) circle (3.7);

\filldraw [draw=darkgray!70!white,fill=white,xshift=-20,xscale=1.2,yscale=1.3] (2.5,0) to [out=90,in=110] (3, 1) to [out=-70,in=-95] (3.9,1.5) to [out=85,in=75] (4.7,0.4) to [out=-115,in=220] (4.9,0.15) to [out=40,in=40] (4.7,-1) -- (4.5,-1.5) -- (4.1,-1) -- (3.8,-1.4) -- (3.7,-1) to [out=100,in=70] (3.3,-1) to [out=250,in=-90] (2.5,0);
\filldraw [draw=darkgray!70!white,pattern=dots,xshift=-20,xscale=1.2,yscale=1.3] (2.5,0) to [out=90,in=110] (3, 1) to [out=-70,in=-95] (3.9,1.5) to [out=85,in=75] (4.7,0.4) to [out=-115,in=220] (4.9,0.15) to [out=40,in=40] (4.7,-1) -- (4.5,-1.5) -- (4.1,-1) -- (3.8,-1.4) -- (3.7,-1) to [out=100,in=70] (3.3,-1) to [out=250,in=-90] (2.5,0);

\filldraw [draw=darkgray, fill=darkgray!60!white] (2.5,0) to [out=90,in=110] (3, 1) to [out=-70,in=-95] (4,1.5) to [out=85,in=75] (4.7,0.4) to [out=-115,in=220] (5,0) to [out=40,in=40] (4.7,-1) -- (4.5,-1.5) -- (4.1,-1) -- (3.9,-1.4) -- (3.8,-1) to [out=100,in=70] (3.3,-1) to [out=250,in=-90] (2.5,0);


\draw[thick,dashed] (0.35,0.5) -- (7.7,0.5);
\node[left] at (0.35,0.5) {$a$};
\node[right] at (7.7,0.5) {$b$};


\begin{scope}[shift={(-2,0)}]
\filldraw [draw=lightgray, fill=lightgray!50!white] (16,1) to [out=45,in=-90] (16,3) to [out=90,in=-135] (18,4) arc [start angle=90, end angle=15,radius=4] to [out=-135,in=90] (21,0) to [out=-90,in=90] (21.5,-1) to [out=-90,in=45] (20,-3) to [out=-135,in=0] (18,-1.5) to [out=180,in=0] (17.5,-1) to [out=180,in=-45] (17.3,-1.5) to [out=135,in=-90] (17,-1) to [out=90,in=-90] (16.5,0) to [out=90,in=-135] (16,1);

\draw (18,0) circle (4.5);
\draw [darkgray,thick] (16.5,0) to [out=90,in=150] (17.5,1.5) to [out=-30,in=160] (18,2) to [out=-20,in=80] (18,0.5) to [out=-100,in=60] (19.5,-0.5) to [out=-120,in=0] (18,-1.5) to [out=180,in=0] (17.5,-1) to [out=180,in=-45] (17.3,-1.5) to [out=135,in=-90] (17,-1) to [out=90,in=-90] (16.5,0);

 \filldraw [draw=darkgray!70!white, pattern=dots] (18,4) arc [start angle=90, end angle=15,radius=4] to [out=45,in=240] (22.04,2) arc [start angle=26.33, end angle=85, radius=4.5] to [out=-120,in=45] (18,4);
 
 \filldraw [draw=darkgray!60!white, fill=darkgray!60!white] (22.04,2) arc [start angle=26.33, end angle=85, radius=4.5] to [out=60,in=-90] (19,7) -- (24,7) -- (24,3);
 \draw[dotted,thick] (19,7) -- (24,7) -- (24,3);
 
\draw[thick] (18,0) circle (4);
\draw[dashed, thick] (17,1.5) to [out=90,in=-160] (18,3) to [out=30,in=-160] (20,5) to [out=20,in=90] (21,4) to [out=-90,in=120] (21,2) to [out=-60,in=60] (21,1.6) to [out=-120,in=120] (21,1.2) to [out=-60,in=60] (21,0.8) to [out=-120,in=120] (20.5,0.4) to [out=-60,in=45] (19.5, -0.45);

\node[above] at (4,-3) {$B$};
\filldraw[fill=white, opacity=0.7,draw=white] (13.7,0.5) rectangle (14.3,-0.5);
\node at (14.1,0) {$D_2$};
\filldraw[fill=white, opacity=0.7,draw=white] (13.2,-0.5) rectangle (13.8,-1.5);
\node at (13.5,-1) {$D^{\lambda}$};
\filldraw[fill=white, opacity=0.7,draw=white] (17.45,-1.7) rectangle (17.62,-2.7);
\node at (17.5,-2.3) {$f(\partial B)$};

\filldraw (17,1.5) circle [radius=0.08];
\node[left,scale=0.9] at (17.1,1.5) {$f(a)$};

\filldraw (19.5,-0.45) circle [radius=0.08];
\node[scale=0.9] at (20,-0.8) {$f(b)$};

\node at (5.6,0) {$L$};
\node at (4.2,0) {$K$};

\node[above, scale=1.1] at (3.3,0.4) {$\ell$};

\node at (20.5,5.5) {$f(\ell\cap\overbar{B})$};
\end{scope}
\end{tikzpicture}\caption{The length of $f(\ell\cap K)$ is at least $2\lambda$, since it travels twice between $D_2$ and $N\setminus D^\lambda$.}\label{fig:1}
\end{figure}
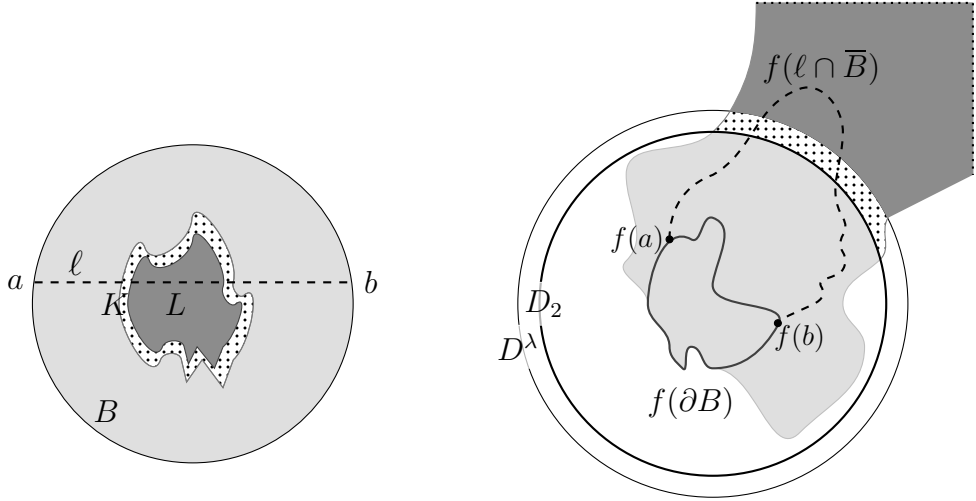
Note that $f(\ell\cap \partial B)\subset D_1$. Let $\ell\cap \partial B=\{a,b\}$. Then $f|_{\ell \cap \overbar{B}}$ is a rectifiable 
curve that starts at $f(a)\in D_1$, travels to $f(\ell\cap L)\subset N\setminus D^\lambda$ and returns to $f(b)\in D_1$. In particular, 
$f(\ell\cap K)$ is a part of the curve $f(\ell\cap \overbar{B})$ that connects the boundary of $D_2$ to points in $N\setminus D^\lambda$
and then returns to the boundary of $D_2$. Thus the length of $f(\ell\cap K)$ equals at least $2\lambda$, because the distance 
between $D_2$ and $N\setminus D^\lambda$ equals $\lambda$. 

Since the (positive) length of $f(\ell\cap K)$ is given by $\int_{\ell\cap K} |\partial f/\partial x_1|$, the set 
$$
{\{x\in \ell\cap K} ~:~ {Df(x)\neq 0\}}
$$ 
has positive linear measure.

The set of lines $\ell$ satisfying the conditions (1), (2) and (3) above has positive $(n-1)$-dimensional measure and by Fubini's theorem 
$$
\left| \{x\in K~~:~~ Df(x)\neq 0\}\right|>0.
$$
The mapping $f$ has finite distortion, thus $J_f(x)>0$ whenever $Df(x)\neq 0$, so \eqref{Jf over K} follows.

Since the behavior of a generic $W^{1,n}$ mapping can be very wild, it will be convenient to approximate $f$ on $B$ by a 
smooth mapping, while keeping the same boundary values.
\begin{lemma}
\label{l2}
Under the assumptions of Proposition~\ref{l11} there is a mapping 
$$
\tf\in C^\infty\cap W^{1,n}(B,N)\cap C^0(\overbar{B},N)
$$ 
such that 
\begin{itemize}
\item[(a)] $f$ and $\tf$  have the same trace on $\partial B$,
\item[(b)] $\int_B J_{\tf} < \Vol (D_3\setminus D_2)$,
\item[(c)] $\int_{B\cap\tf^{-1}(N\setminus D_1)} J_{\tf} >0$.
\end{itemize}
\end{lemma}
\begin{proof}[Proof of Lemma \ref{l2}]
According to \cite[Lemma 6 bis]{Bethuel}, there is a sequence of $C^{\infty}\cap W^{1,n}$ mappings $f_k\colon B\to N$ that 
converge to $f$ in $W^{1,n}(B,N)$ and $f_k |_{\partial B}=f |_{\partial B}$ in the sense of traces.
Since the trace of $f$ on $\partial B$ is continuous, $f_k$ can be chosen to be continuous up to the boundary.

We will show that one can choose $\tf$ to be equal $f_k$ for $k$ sufficiently large.

Note that $J_{f_k}\xrightarrow{L^1} J_f$, which, together with \eqref{349}, implies that
$$
\int_B J_{f_k}<\Vol(D_3\setminus D_2)
$$
for $k$ sufficiently large.

Then,  by \eqref{Jf over K} and convergence of $f_k$,
\begin{equation*}
\int_{K}J_{f_k} \to \int_{K}J_{f}=c>0,
\end{equation*}
thus, for $k$ sufficiently large,
$$
\int_{K}J_{f_k}>\frac{c}{2}>0.
$$
This is already rather close to (c), but the set of integration is wrong, therefore we write
$$
\frac{c}{2}<\int_{K}J_{f_k}
=\int_{K\setminus f_k^{-1}(N\setminus D_1)}J_{f_k}+\int_{K\cap f_k^{-1}(N\setminus D_{1})} J_{f_k}=I_1+I_2.
$$
Consider the set of integration in $I_1$. If 
$$
x\in K\setminus f_k^{-1}(N\setminus D_1)= \left(B\cap f^{-1}(N\setminus \overbar{D_2})\right)\setminus f_k^{-1}(N\setminus D_1),
$$ 
then $f(x)\in N\setminus \overbar{D_2}$, but $f_k(x)\not\in N\setminus D_1$, thus $f_k(x)\in D_1$. 
Therefore $\dist(f(x),f_k(x))>d=\dist(D_1,N\setminus D_2)$, and since $f_k\xrightarrow{L^1}f$, 
the measure of the set
$$
K\setminus f_k^{-1}(N\setminus D_1)\subset\left\{x\in B~~:~~\dist(f_k(x),f(x))>d\right\}
$$
tends to $0$ with $k\to\infty$. We have
%
$$
|I_1|\leq\int_{K\setminus f_k^{-1}(N\setminus D_1)}|J_{f_k}-J_f|+
\int_{K\setminus f_k^{-1}(N\setminus D_1)}|J_{f}|.
$$
The first term in the above sum tends to $0$ with $k\to\infty$ thanks to the convergence of $J_{f_k}$ to $J_f$ in $L^1$; 
the second one also  tends to $0$, because the measure of the set of integration tends to $0$, as proven above. Thus, for $k$ 
sufficiently large, the term $I_2$ must be positive. This proves (c) and completes the proof of Lemma~\ref{l2}.
\end{proof}

Now we are ready to complete the proof of Proposition~\ref{l11}.
We claim that there is an open set $D'$ such that $\overbar{D'}\Subset D_3\setminus D_2$ is disjoint from the compact set
$\tf(\overbar{B})$. Indeed, since $\tf(\partial B)=f(\partial B)\subset D_1$, we have
$\tf(\overbar{B})\cap (D_3\setminus D_2)\subset \tf(B)$
and it suffices to observe that
$$
\Vol(\tf(\overbar{B})\cap (D_3\setminus D_2))\leq \Vol(\tf(B))\leq \int_B J_{\tf}<\Vol(D_3\setminus D_2).
$$
If $p$ is the common center of the balls $D_1$, $D_2$ and $D_3$, then the inverse of the exponential map
$\exp^{-1}_p$ maps the balls $D_1$, $D_2$ and $D_3$ onto the balls $B(0,2d)$, $B(0,3d)$ and $B(0,d_N/2)$. We can also assume that the set 
$D'$ is such that $\exp_p^{-1}(D')$ is a Euclidean ball $B'$ whose closure is contained in $B(0,d_N/2)\setminus B(0,3d)$.

\begin{figure}[h]

\begin{tikzpicture}[>=latex']
\def\centerarc[#1](#2)(#3:#4:#5)
    { \draw[#1] ($(#2)+({#5*cos(#3)},{#5*sin(#3)})$) arc (#3:#4:#5); }

\node at (0,2.3) {};
\filldraw[color=black!45!white,draw=darkgray]
(0.31,-1) to [out=120, in =-70]  (0.25, -0.9) to [out=0,in=200] (0.6,-0.8) to [out=20,in=140] (0.5,-1) to [out=-40, in =-10] (1,-1.3) to [out = 170, in =120] (0.5,-1.1) to [out=-60, in=0] (0.7,-1.3) to [out=180,in=-20] (0.31,-1)--cycle;
\shadedraw[top color=black!30!white, bottom color = black!5!white, opacity=0.5] (0,0) to [out=-90, in=175] (3,-2.2) to [out=-5, in=190] (6,-2) to [out=10, in =160] (9,-1.5) to [out=-20,in=-90] (12,0) to  [out=90, in=20] (9,1) to [out=200, in=-20] (5.5,1.5) to [out=160, in=0] (3, 2) to [out=180, in=90] (0,0) -- cycle;
\filldraw[fill=white] (4,0) to [out=-30, in=210] (6,0) to [out=165, in=15] (4,0) -- cycle;
\draw[thick] (3.75, 0.2) to [out=-50,in=150] (4,0) to [out=-30, in=210] (6,0) to [out=30, in=50] (6.15,0.1);
\filldraw[fill=white] (9.5,0) to [out=-30, in=210] (10.5,0) to [out=165, in=15] (9.5,0) -- cycle;
\draw[thick] (9.35, 0.1) to [out=-50,in=150] (9.5,0) to [out=-30, in=210] (10.5,0) to [out=30, in=50] (10.56,0.07);

\draw[thick,dotted,draw=darkgray] (3,-2.2) to [out=140,in=-90] (2,0) to [out=90,in=-140] (3, 2);

\shadedraw[top color=black!45!white, bottom color = black!5!white,draw=gray,opacity=0.5] (0,0) to [out=-90, in=175] (3,-2.2) to [out=65,in=-90] (3.5,0) to [out=90,in=-65] (3, 2) to [out=180, in=90] (0,0) -- cycle;
\node  at (2,-1.5) {$D_3$};

\filldraw[color=black!35!white ,draw=darkgray,opacity=0.7] (1.4,0) to [out=0, in=210] (2,0.6) to [out=30, in=185] (3.5, 1.2)  to [out=5, in=-90] (3.7, 1.7) to [out=90, in = 120] (3.9,1.1)  to [out=-60, in=170] (6,1.2)  to [out=-10, in=20] (5,0.9)  to [out=200, in =-60] (3.8,0.9) to [out=120,in =20] (2,0.5) to [out=200, in =80] (1.7,-0.2) to [out=260, in=170] (2.7,-0.5) to [out=-10,in=90] (3,0) to [out=-90,in=100] (2.9, -0.5) to [out=-80, in=30] (4,-0.6) to [out=210, in =60] (3,-0.9) to [out=240, in=5] (5,-1.5) to [out=185, in=-10] (3,-1.3) to [out=170, in =-40] (2.8, -0.7) to [out=140, in=0] (1.4,-0.6) to [out=180,in =60] (0.31,-1) to [out=120, in =-70]  (0.25, -0.9) to [out=50,in=240] (1,-0.2) to [out=60,in=180] (1.2,0.5) to [out=0,in=180] (1.4,0) --  cycle;
\draw[->,thin,darkgray] (3.4,-2.3) -- (2.96, -0.75);
\node[below] at (3.4,-2.3) {$\tilde{f}(\overbar{B})$};

\draw[thick] (1.15,-0.1) to [out=90, in=120] (1.2,0) to [out=-60,in=-150] (1.5,-0.1) to [out=30,in=30] (1.4,-0.3) to [out=-150,in=0] (1.4, -0.4) to [out=180,in=20] (1.2,-0.4)	to [out=200,in=180] (1.15,-0.2) to [out=0,in=-90] (1.25,-0.2) to [out=90,in=-90] (1.15,-0.1) -- cycle ;
\draw[->,thin,darkgray] (1.2,-2.1) -- (1.4, -0.4);
\node[below] at (1.2,-2.1) {$\tilde{f}(\partial B)$};

\node at (0.95,-0.4) {$\scriptstyle D_{1}$};
\centerarc[thin](1.3,-0.3)(-150:170:0.4);
\draw[help lines] (1.3,-0.3) circle [radius=0.4];
\node[darkgray] at (2.1,-0.3) {$\scriptstyle D_2$};

\draw[help lines] (1.3,-0.3) circle [radius=0.8];
\centerarc[thin](1.3,-0.3)(10:350:0.8);
\filldraw[color=white ,draw=gray,opacity=0.6] (1.3,1.1) circle [x radius=0.4, y radius=0.3];
\node at (1.3,1.1) {$\scriptstyle D'$};
\end{tikzpicture}

\caption{The image of $\tf(\partial B)$ is contained in $D_1$; there exists an open set  $D'\subset D_3$ with closure disjoint from $\tf(\overbar{B})$.}
\label{f1}
\end{figure}

We claim that there is a Lipschitz retraction 
$$
R\colon N\setminus \overbar{D'}\to\overbar{D_1}.
$$
Let $\Phi\colon \overbar{B(0,d_N/2)} \to \bbbs^n$ be a Lipschitz map onto the unit sphere in $\bbbr^n$ such that
\begin{itemize}
\item $\Phi$ maps $\partial B(0,d_N/2)$ onto a fixed point $x_o\in\bbbs^n$, on the equator of $\bbbs^n$,
\item $\Phi$ is a diffeomorphism of $B(0,d_N/2)$ onto $\bbbs^n \setminus \{x_o\}$,
\item $\Phi$ maps balls $\overbar{B(0,2d)}$ and $\overbar{B'}$ onto polar caps $\mathscr{C}_S$ and $\mathscr{C}_N$ around the south and the north pole respectively.
\end{itemize}
Then
$$
\Phi\circ \exp_p^{-1}\colon \overbar{D_3} \to \bbbs^n
$$
maps $\partial D_3$ onto $x_o$, so the map $\Psi:N\to \bbbs^n$ defined by
$$
\Psi(x)=\begin{cases} \Phi\circ \exp_p^{-1}(x) & \text{ if }x\in \overbar{D_3},\\
x_o&\text{ if }x\in N\setminus \overbar{D_3}
\end{cases}
$$
is Lipschitz.
Let $p\colon \bbbs^n\setminus \mathscr{C}_N \to \mathscr{C}_S$ be the retraction along meridians.

$\Psi$ maps $\overbar{D'}$ onto $\mathscr{C}_N$, so $\Psi$ maps $N\setminus \overbar{D'}$ onto $\bbbs^n\setminus \mathscr{C}_N$ 
and hence $p\circ\Psi\colon N\setminus \overbar{D'}\to\mathscr{C}_S$ is well defined and Lipschitz. Note also that $\Psi$ maps 
$\overbar{D}_1$ onto $\mathscr{C}_S$ in a diffeomorphic way, so $\big( \Psi|_{\overbar{D_1}}\big)^{-1}$  
maps $\mathscr{C}_S$ back onto $\overbar{D_1}$. Hence the map
$$
R= \big( \Psi|_{\overbar{D_1}}\big)^{-1}\circ p\circ  \Psi \colon N\setminus \overbar{D'} \to \overbar{D_1}
$$
is well defined and Lipschitz.

If $x\in\overbar{D_1}$, then $\Psi(x)\in\mathscr{C}_S$, so $p(\Psi(x))=\Psi(x)$ and hence $R(x)=x$. 
That means $R\colon N\setminus \overbar{D'}\to \overbar{D_1}$ is a Lipschitz retraction.

Since $\tf$ maps $\overbar{B}$ into $N\setminus \overbar{D'}$, the map 
$R\circ \tf\colon \overbar{B}\to N$ is in $C^0\cap W^{1,n}$. Also $\tf |_{\partial B}=R\circ\tf |_{\partial B}$, 
because $\tf(\partial B)=f(\partial B)\subset D_1$ and $R$ is identity on $\overbar{D_1}$.

Now, note that the mappings $\tf:B\to N$ and $R\circ\tf:B\to N$ are both continuous, in $W^{1,n}$, and have the same boundary values. We can thus define a 
mapping $F:\bbbs^n\to N$ in such a way that on the upper hemisphere (diffeomorphic to $B$) $F$ coincides with $\tf$, and on the lower 
hemisphere -- with $R\circ \tf$, see Figure \ref{f3}. Obviously, $F$ constructed this way is continuous and in $W^{1,n}$. It is also 
not onto $N$, since $D'$ is disjoint with its image. 

Recall that by a result of Schoen and Uhlenbeck \cite{schoenu1,schoenu2}, smooth mappings $C^\infty(\bbbs^n,N)$ are dense in $W^{1,n}(\bbbs^n,N)$
(see also a survey paper \cite{Hajlasz} and references therein). In fact, since $F$ is continuous, we can find a sequence
$F_k\in C^\infty(\bbbs^n,N)$ that converges to $F$ both uniformly and in the $W^{1,n}$ norm. Hence for $k\geq k_0$, $F_k$ is not surjective so its degree equals zero,
and thus $\int_{\bbbs^n}J_{F_k}=0$. Since the integral of the Jacobian is continuous in the $W^{1,n}$ norm (by H\"older's inequality), it follows that
$$
\int_{B}J_{\tf}-\int_{B}J_{R\circ \tf}=\int_{\bbbs^n} J_F = \lim_{k\to\infty} \int_{\bbbs^n} J_{F_k} = 0.
$$
That is 
\begin{equation}\label{eq4}
\int_{B}J_{\tf}=\int_{B} J_{R\circ \tf}.
\end{equation}
The mappings $\tf$ and $R\circ \tf$ coincide in $\tf^{-1}(D_1)$ (and thus their Jacobians $J_{\tf}$ and $J_{R\circ \tf}$ are equal there), 
which, together with \eqref{eq4}, implies that
\begin{equation}
\label{eq5}
\int_{B\cap\tf^{-1}(N\setminus D_1)}J_{\tf}=\int_{B\cap\tf^{-1}(N\setminus D_1)}J_{R\circ \tf}.
\end{equation}
Note that $R\circ \tf$ maps the set $\tf^{-1}(N\setminus D_1)$ onto the boundary of the ball $\overbar{D_1}$. Hence the Jacobian of
$R\circ \tf$ equals zero on $\tf^{-1}(N\setminus D_1)$ proving that the right hand side of \eqref{eq5} is zero. However,
the left hand side is positive by Lemma~\ref{l2}, (c). This yields the desired contradiction, that ends the proof of Proposition~\ref{l11}.
\end{proof}

\begin{figure}[h]
\begin{tikzpicture}[scale=0.9,>=latex']


\shadedraw[top color=black!25!white, bottom color=black!5!white,draw=black] (0,0) circle [radius=1.5];
\filldraw[fill=lightgray] (-1.5,0) arc (180:360:1.5 and 0.5) arc (0:180:1.5) -- cycle;
\draw[thick] (-1.5,0) arc (180:360:1.5 and 0.5);
\draw[dotted] (1.5,0) arc (0:180:1.5 and 0.5);
\draw[thick,->] (1.2,1.2) to [out=20,in=160] (3.4,1.2);
\draw[->] (1,-0.38) to [out=-10,in=-170] (3.92,-0.3);
\node at (2.2,1.7) {$\tilde{f}$};
\draw[thick,->] (1.2,-1.2) to [out=-20,in=-160] (3.4,-1.2);
\node at (2.2,-1.75) {$R\circ \tilde{f}$};
\begin{scope}[shift={(3,0)},scale=0.8]
\filldraw[color=black!20!white,draw=gray]
(0.31,-1) to [out=120, in =-70]  (0.25, -0.9) to [out=0,in=200] (0.6,-0.8) to [out=20,in=140] (0.5,-1) to [out=-40, in =-10] (1,-1.3) to [out = 170, in =120] (0.5,-1.1) to [out=-60, in=0] (0.7,-1.3) to [out=180,in=-20] (0.31,-1)--cycle;
\shadedraw[top color=black!35!white, bottom color = black!5!white, opacity=0.5] (0,0) to [out=-90, in=175] (3,-2.2) to [out=-5, in=190] (6,-2) to [out=10, in =160] (9,-1.5) to [out=-20,in=-90] (12,0) to  [out=90, in=20] (9,1) to [out=200, in=-20] (5.5,1.5) to [out=160, in=0] (3, 2) to [out=180, in=90] (0,0) -- cycle;


\filldraw[fill=white] (4,0) to [out=-30, in=210] (6,0) to [out=165, in=15] (4,0) -- cycle;
\draw[thick] (3.75, 0.2) to [out=-50,in=150] (4,0) to [out=-30, in=210] (6,0) to [out=30, in=50] (6.15,0.1);
\filldraw[fill=white] (9.5,0) to [out=-30, in=210] (10.5,0) to [out=165, in=15] (9.5,0) -- cycle;
\draw[thick] (9.35, 0.1) to [out=-50,in=150] (9.5,0) to [out=-30, in=210] (10.5,0) to [out=30, in=50] (10.56,0.07);



\filldraw[color=black!40!white ,draw=darkgray,opacity=0.7] (1.4,0) to [out=0, in=210] (2,0.6) to [out=30, in=185] (3.5, 1.2)  to [out=5, in=-90] (3.7, 1.7) to [out=90, in = 120] (3.9,1.1)  to [out=-60, in=170] (6,1.2)  to [out=-10, in=20] (5,0.9)  to [out=200, in =-60] (3.8,0.9) to [out=120,in =20] (2,0.5) to [out=200, in =80] (1.7,-0.2) to [out=260, in=170] (2.7,-0.5) to [out=-10,in=90] (3,0) to [out=-90,in=100] (2.9, -0.5) to [out=-80, in=30] (4,-0.6) to [out=210, in =60] (3,-0.9) to [out=240, in=5] (5,-1.5) to [out=185, in=-10] (3,-1.3) to [out=170, in =-40] (2.8, -0.7) to [out=140, in=0] (1.4,-0.6) to [out=180,in =60] (0.31,-1) to [out=120, in =-70]  (0.25, -0.9) to [out=50,in=240] (1,-0.2) to [out=60,in=180] (1.2,0.5) to [out=0,in=180] (1.4,0) --  cycle;

\draw[gray,fill] (1.3,-0.3) circle [radius=0.025];

\draw[thick] (1.15,-0.1) to [out=90, in=120] (1.2,0) to [out=-60,in=-150] (1.5,-0.1) to [out=30,in=30] (1.4,-0.3) to [out=-150,in=0] (1.4, -0.4) to [out=180,in=20] (1.2,-0.4)	to [out=200,in=180] (1.15,-0.2) to [out=0,in=-90] (1.25,-0.2) to [out=90,in=-90] (1.15,-0.1) -- cycle ;
\draw[thick,gray] (1.3,-0.3) circle [radius=0.5];
\filldraw[thick,color=black!2!white,draw=lightgray] (1.3,1.1) circle [x radius=0.4, y radius=0.3];
\node[scale=0.7] at (1.3,1.1) {$D'$};
\end{scope}
\draw[->] (1,-0.38) to [out=-10,in=-170] (3.92,-0.3);

\end{tikzpicture}
\caption{Mapping $F$.}
\label{f3}
\end{figure}
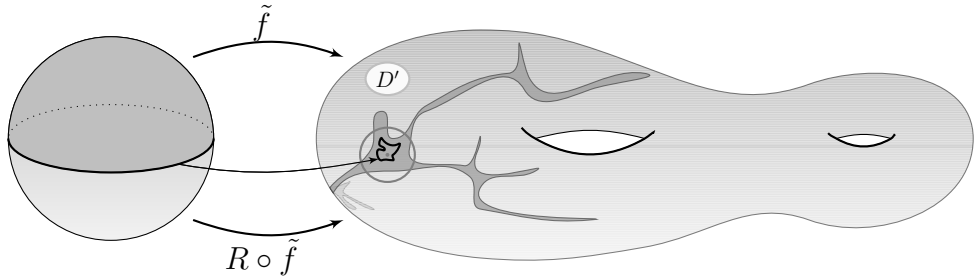

With the help of Proposition~\ref{l11} we can now complete the proof of Theorem~\ref{from ball}.

Fix $x\in \bbbb^n$. 
According to Fubini's theorem, for almost all $r$ such that $0<r\leq\dist(x,\partial \bbbb^n)$, the function $f$ restricted to the 
sphere $S(x,r)$ belongs to the Sobolev space $W^{1,n}(S(x,r))$. Since $n>n-1=\dim S(x,r)$, $f$ restricted to $S(x,r)$ is H\"older 
continuous with exponent $1/n$ and Morrey's inequality (see Corollary \ref{cor:morrey} in the Appendix) yields 
\begin{equation}
\diam f(S(x,r)) \leq \C_M \Bigg(r\int_{S(x,r)} |Df|^n\Bigg)^{\frac{1}{n}},
\end{equation}
where the constant $\C_M$ depends on $n$ only (see \eqref{morrey euc}).

In fact, one can assume more: that (after a proper choice of a representative) $f$ is continuous on $S(x,r)$ for almost all $(x,r)\in\bbbb^n\times(0,\infty)$ such that $0<r\leq \dist(x,\partial\bbbb^n)$ and on almost all lines parallel to the coordinate directions, see Corollary  \ref{cor:12} in Appendix. 
From now on we shall assume that $f$ is such a representative; let $X$ be the full measure subset of $\bbbb^n$ such that $f$ is 
continuous on a.e. sphere centered at $x\in X$ and contained in $\bbbb^n$.

Recall also that the constants $\A_N$ and $\B_N$ were defined in the statement of Theorem~\ref{from ball}.

Fix $x_o\in X$ and $R>0$ such that $B(x_o,2R)\subset\bbbb^n$, and fix $r\in (0,R)$. Let
$$
T=\Big\{ t\in (0,2R):\, t\int_{S(x_o,t)}|D f|^n\geq 2\A_N\Big\}\, .
$$
Then
$$
\A_N  > \int_{B(x_0,2R)} |Df|^n = 
\int_0^{2R} \Big(\int_{S(x_o,t)} |Df|^n\Big)\, dt
\geq \int_T\frac{1}{t}\Big( t\int_{S(x_o,t)} |Df|^n\Big)\, dt 
 \geq |T|\cdot\frac{1}{2R}\cdot 2 A_N,
$$
so $|T|<R$. Hence we can find $\rho\in (R,2R)$ (in the complement of $T$) such that
\begin{itemize}
\item[(a)] $f\in C^{0,1/n}\cap W^{1,n}(S(x_0,\rho),N)$,
\item[(b)]
$\rho\int_{S(x_o,\rho)} |Df|^n<2\A_N$.
\end{itemize}
This and Morrey's inequality (Corollary~\ref{cor:morrey}) yield
$$
\diam f(S(x_o,\rho))\leq \C_M\Big(\rho\int_{S(x_o,\rho)}|Df|^n\Big)^{1/n}<
\C_M(2\A_N)^{1/n}=d_N/60\, .
$$
Let $d=\diam f(S(x_o,\rho))$. Note that $d<d_N/60<d_N/10$.

Let $D_1$, $D_2$ and $D_3$ be concentric balls in $N$ of radii $2d$, $3d$ and $d_N/2$ such that
$f(S(x_o,\rho))\subset D_1$.

Note that $D_3\setminus D_2$ contains a ball of radius $d_N/10$, thus
$\Vol(D_3\setminus D_2)\geq \B_N$. On the other hand, the pointwise estimate
$J_f\leq |Df|^n$ yields
$$
\int_{B(x_o,\rho)} J_f\leq \int_{B(x_o,\rho)} |Df|^n<\B_N\leq\Vol(D_3\setminus D_2).
$$
Overall, $B(x_o,\rho)$ satisfies assumptions of Proposition~\ref{l11}, by which almost all points of
$B(x_o,\rho)$ are mapped into $\overbar{D_2}$.
Hence
$$
\osc_{B(x_0,\rho)} f \leq \diam\overbar{D_2} = 6d=6\osc_{S(x_o,\rho)} f<d_N/10.
$$
Here, as before, we understand the oscillation on the ball as the essential oscillation.

In particular, for almost all $t\in (0,\rho)$, $f$ restricted to $S(x_o,t)$ is H\"older continuous and almost all
points of $S(x_o,t)$ are mapped into $\overbar{D_2}$ so $f(S(x_o,t))\subset \overbar{D_2}$.
For such $t\in (0,\rho)$,
$$
\diam f(S(x_o,t))\leq\diam\overbar{D_2}<d_N/10.
$$
That means all such balls $B(x_o,t)$ satisfy the assumptions of Proposition~\ref{l11} and thus Proposition~\ref{l11} and Morrey's inequality yield
$$
\osc_{B(x_o,t)} f\leq 6\osc_{S(x_o,t)} f \leq 6 \C_M\Big( t\int_{S(x_o,t)} |Df|^n\Big)^{1/n}
$$
for almost all $t\in (0,\rho)$. Since $(0,R)\subset (0,\rho)$, it is also true for almost all $t\in (0,R)$.
Integrating this inequality yields (recall that $0<r<R$)
\begin{align*}
\left(\osc_{B(x_o,r)} f\right)^n &\log(R/r) 
 = \int_r^R \left(\osc_{B(x_o,r)} f\right)^n\frac{dt}{t}
\leq \int_r^R \left(\osc_{B(x_o,t)} f\right)^n\frac{dt}{t} \\
& \leq (6\C_M)^n \int_r^R \Bigg(\int_{S(x_o,t)} |Df|^n\Bigg)\, dt 
= (6 \C_M)^n \int_{B(x_0,R)\setminus B(x_o,r)} |Df|^n.
\end{align*}
 This proves \eqref{osc1r}. Since oscillation on balls is essential oscillation, we do 
not know yet if $f$ has a continuous representative.

We shall write, for short, that a triple $(x,r,R)$ satisfies condition (T), 
if $B(x,2R)\subset\bbbb^n$ and $0<r<R$.

Whenever $x\in X$ and $(x,r,R)$ satisfies (T), the inequality 
\begin{equation}
\label{A}
\Big(\osc_{B(x,r)}f\Big)^n\leq \frac{C(n)}{\log(R/r)} \int_{B(x,R)} |Df|^n
\end{equation}
holds with $C(n)=(6\C_M)^n$.  However, the oscillation $\osc_{B(x,r)}f$ in \eqref{A} is understood as the essential oscillation, 
that is there is a subset $N_{x,r,R}\subset B(x,r)$ of measure zero  such that  for any $u,v\in B(x,r)\setminus N_{x,r,R}$ there holds the pointwise estimate
\begin{equation}
\label{B}
d(f(u),f(v))^n\leq \frac{C(n)}{\log(R/r)} \int_{B(x,R)} |Df|^n.
\end{equation}

Let us now choose a countable subset $\tilde{X}\subset X$, $\tilde{X}$ dense in $\bbbb^n$, and let 
$$
N=\bigcup_{(x,r,R)} N_{x,r,R},
$$
where the sum runs over all triples $(x,r,R)$ satisfying conditions (T) and such that $x\in\tilde{X}$ and $r,R\in\mathbb{Q}$. Obviously $N$ has measure zero.

Fix $u,v\in \bbbb^n\setminus N$.
The set of all triples $(x,r,R)$ satisfying (T) and such that $u,v\in B(x,r)$ is open in 
$\bbbb^n\times (0,\infty)\times (0,\infty)$, therefore to each such triple $(x,r,R)$ we 
can find another triple $(x',r',R')$ that satisfies (T), is arbitrarily close to $(x,r,R)$, 
we have $u,v\in B(x',r')$, $x'\in \tilde{X}$ and $r,R\in\mathbb{Q}$.

Then $u,v\in B(x',r')\setminus  N_{x',r',R'}$, thus 
$$
d(f(u),f(v))^n\leq \frac{C(n)}{\log(R'/r')} \int_{B(x',R')} |Df|^n,
$$
but since $(x',r',R')$ is arbitrarily close to $(x,r,R)$, the estimate \eqref{B} holds as well. 
Thus we established \eqref{B} for all $u,v\in \bbbb^n\setminus N$ and any $(x,r,R)$ satisfying (T) and such that $u,v\in B(x,r)$.

Let now $p$ and $\rho$ denote the center and radius of $\bbbb^n$ respectively. 
Fix  $\eps\in (0,\rho/2)$ and assume that $u,v\in B(p,\rho-2\eps)\setminus N$ and $|u-v|<\eps$. 
Taking in \eqref{B}, $x=(u+v)/2$, $r=|u-v|$, $R=\eps$, we get
that $u,v\in B(x,r)\setminus N$ and $(x,r,R)$ satisfies condition (T), so
$$
 d(f(u),f(v))^n\leq \frac{C(n)}{\log(\eps/|u-v|)} \int_{B(x,R)} |Df|^n\leq 
\frac{C(n)}{\log(\eps/|u-v|)}\int_{\bbbb^n} |Df|^n.
 $$
This shows that $f$ is uniformly continuous on $B(p,\rho-2\eps)\setminus N$, and since $\eps>0$ can be chosen arbitrarily small, 
$f$ is locally uniformly continuous on $\bbbb^n\setminus N$. 
 
The set $\bbbb^n\setminus N$ is dense in $\bbbb^n$, hence 
$f|_{\bbbb^n\setminus N}$ admits a unique continuous extension to~$\bbbb^n$. This extension agrees with $f$ on the set 
$\bbbb^n\setminus N$ of full measure, thus it indeed is a representative of $f$.
 
Now, by continuity of $f$, the estimate \eqref{B}  holds not only outside $N_{x,r,R}$, but on the whole $B(x,r)$ and 
the inequality \eqref{A} follows for all $(x,r,R)$ satisfying (T). This ends the proof of Theorem~\ref{from ball}.
\end{proof}

\section{Orlicz-Sobolev mappings with positive Jacobian}\label{sec:3}
In this Section we give the example of a discontinuous mapping $f:\bbbs^n\to\bbbs^n$ such that $J_f>0$ a.e. (and thus $f$ has finite distortion), 
with $Df\in L^n \Log^{-1}$. The example is a slight simplification of the example of a mapping with infinite degree, given in \cite[Section~3.4]{HIMO}.

Let
$$
S^\beta_\alpha=\{(z\sin\theta,\cos \theta)~~:~~z\in\bbbs^{n-1},\,\alpha\leq\theta\leq\beta\}\subset\bbbs^n
$$
be the spherical slice bounded by latitude spheres $\theta=\alpha$ and $\theta=\beta$ (with $\theta=0$ denoting the north and $\theta=\pi$ 
the south pole of $\bbbs^n$). One can easily check using integration in spherical coordinates that
$$
\Vol(S^\beta_\alpha)\leq\int_\alpha^\beta n\omega_n \sin^{n-1}\theta\, d\theta\leq \omega_n (\beta^n-\alpha^n).
$$

The mapping $f^\beta_\alpha:S^\beta_\alpha\to \bbbs^n$, 
$$
f^\beta_\alpha(z\sin\theta,\cos\theta)=\Big(z\sin\frac{(\theta-\alpha)\pi}{\beta-\alpha},\cos \frac{(\theta-\alpha)\pi}{\beta-\alpha}\Big),
$$
stretches the slice $S^\beta_\alpha$ to the whole sphere $\bbbs^n$, mapping the latitude sphere $\theta=\alpha$ into the north pole and $\theta=\beta$ 
into the south pole. Obviously, $f^\beta_\alpha$ preserves the orientation, thus $J_{f^\beta_\alpha}>0$.

Likewise, if $\sigma:\bbbs^{n-1}\to \bbbs^{n-1}$ is a fixed reflection (i.e. an orientation reversing isometry), the mapping 
$$
\tilde{f}^\beta_\alpha(z\sin\theta,\cos\theta)=\Big(\sigma(z)\sin\frac{(\beta-\theta)\pi}{\beta-\alpha},\cos \frac{(\beta-\theta)\pi}{\beta-\alpha}\Big)
$$
stretches $S^\beta_\alpha$ to the whole sphere $\bbbs^n$, mapping this time $\theta=\alpha$ into the south pole and $\theta=\beta$ into the north pole, 
and the mapping is again orientation preserving (the change in orientation in the latitude coordinate is compensated by $\sigma$ reversing the 
orientation in the equatorial, i.e. longitude coordinates), hence $J_{\tilde{f}^\beta_\alpha}>0$.

If $\beta\leq \pi/2$, i.e. $S^\beta_\alpha$ is contained in the upper hemisphere, the map $f_\beta^\alpha$ stretches in the equatorial (latitude) directions by the factor $\left|\sin \frac{(\theta-\alpha)\pi}{\beta-\alpha}/\sin\theta\right|$ and in the longitude direction by the factor $\pi/(\beta-\alpha)$, thus 
$$
|Df^\beta_\alpha|\lesssim\max\Big\{\frac{1}{\sin\alpha}, \frac{\pi}{\beta-\alpha}\Big\}\, .
$$ 
An analogous estimate holds for $D\tilde{f}^\beta_\alpha$.

Now, let us fix $\theta_k=2^{-k^2}\pi$ for $k=0,1,\ldots$ and define $f:\bbbs^n\to\bbbs^n$,
$$
f(z\sin\theta,\cos\theta)=\begin{cases} f^{\theta_{k-1}}_{\theta_k} &\text{ if } \theta_{k}\leq \theta\leq \theta_{k-1}\text{ and }k\geq 1\text{ is odd},\\
\tilde{f}^{\theta_{k-1}}_{\theta_k} &\text{ if } \theta_{k}\leq \theta\leq \theta_{k-1}\text{ and }k\geq 1\text{ is even}.
\end{cases}
$$
The above mapping stretches each of the slices $A_k=S^{\theta_{k-1}}_{\theta_k}$ onto the whole sphere in an orientation-preserving way 
(thus $J_f>0$ a.e.), mapping the longitude spheres $\theta=\theta_k$ alternately into the north and south poles. Obviously, $f$ has a non-removable 
discontinuity at the north pole.  However, if $\theta\in (\theta_{k-1},\theta_k)$, $k\geq 2$, we have 
$$
|Df(z\sin\theta,\cos\theta)|\lesssim \frac{1}{\sin\theta_{k-1}}+\frac{\pi}{\theta_{k-1}-\theta_k}\lesssim 2^{(k-1)^2}.
$$
If $k=1$, $f^{\theta_o}_{\theta_1}=f^\pi_{\pi/2}$ is Lipschitz, so we can estimate the norm of its derivative by a constant.
Therefore, for $P(t)=t^n/\log(e+t)$, 
\begin{equation}
\begin{split}
\int_{\bbbs^n}P(|Df|)&=\sum_{k=1}^\infty \int_{A_k} P(|Df|)\leq \int_{A_1} P(|Df|)+\sum_{k=2}^\infty |A_k | P(2^{(k-1)^2})\\
&\lesssim 1+\sum_{k=2}^\infty (2^{-(k-1)^2 n}-2^{-k^2 n}) \frac{2^{(k-1)^2 n}}{(k-1)^2}\\
&\leq 1+\sum_{k=2}^\infty \frac{1}{(k-1)^2}<\infty,
\end{split}
\end{equation}
which proves that $|Df|\in L^n\Log^{-1}$.


\section{Appendix}\label{sec:5}
\subsection{Morrey's inequality on manifolds}
\label{sec:mor}

The following inequality for real valued functions was proved in \cite[Lemma 4.10.1]{IwaniecM}.
\begin{lemma}
\label{osclemma}
If $S(r)$ is an $(n-1)$-dimensional Euclidean sphere of radius $r$ and $f\in W^{1,n}(S(r))$, then 
$f$ has a $C^{0,1/n}$-H\"older continuous representative
which satisfies
\begin{equation}
\label{morrey euc}
\osc_{S(r)} f=\sup_{x,y\in S(r)}|f(x)-f(y)|\leq \C_M\,\Bigg( r\int_{S(r)}|Df|^n\Bigg)^{1/n},
\quad
\text{where}
\quad
\C_M=\frac{(n-1)\pi}{(n\omega_n)^{1/n}}\, .
\end{equation}
\end{lemma}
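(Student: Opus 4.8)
The plan is to reduce the statement to the unit sphere by scaling and to smooth functions by density, and then to prove the sharp estimate on $\bbbs^{n-1}$ by an integral-geometric averaging argument.

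\emph{Reduction by scaling.} For $f\in W^{1,n}(S(r))$ put $g(\omega)=f(r\omega)$ for $\omega\in\bbbs^{n-1}$. By the chain rule the spherical gradients satisfy $|\nabla_{\bbbs^{n-1}}g(\omega)|=r\,|\nabla_{S(r)}f(r\omega)|$, and since the dilation $\omega\mapsto r\omega$ multiplies $(n-1)$-dimensional surface measure by $r^{n-1}$, one gets $\int_{\bbbs^{n-1}}|\nabla g|^n=r\int_{S(r)}|\nabla f|^n$. As $\osc_{S(r)}f=\osc_{\bbbs^{n-1}}g$, it suffices to produce a $C^{0,1/n}$ representative of $g$ satisfying
\[
\osc_{\bbbs^{n-1}}g\le \C_M\Big(\int_{\bbbs^{n-1}}|\nabla g|^n\Big)^{1/n},
\qquad \C_M=\frac{(n-1)\pi}{(n\omega_n)^{1/n}}.
\]

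\emph{Reduction to smooth functions and the continuous representative.} Granting this inequality for every $g\in C^\infty(\bbbs^{n-1})$, take smooth $g_k\to g$ in $W^{1,n}(\bbbs^{n-1})$ (smooth maps being dense) and apply the inequality to $g_j-g_k$; together with the elementary bound $\big|\tfrac{1}{n\omega_n}\int_{\bbbs^{n-1}}(g_j-g_k)\big|\le (n\omega_n)^{-1/n}\|g_j-g_k\|_{L^n}$ this shows that $(g_k)$ is Cauchy in $C^0(\bbbs^{n-1})$, since $\sup|g_j-g_k|\le|\mathrm{mean}(g_j-g_k)|+\osc(g_j-g_k)$. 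Its uniform limit is a continuous representative of $g$, and the inequality passes to the limit. Local H\"older continuity with exponent $1/n$ then follows from the Euclidean Morrey embedding applied in a geodesic chart around each point (equivalently, to the $0$-homogeneous extension $\tilde g(z)=g(z/|z|)$ on a spherical shell $\{1/2<|z|<2\}\subset\bbbr^n$, where $|\nabla\tilde g(r\omega)|=r^{-1}|\nabla g(\omega)|$, so that $\|\nabla\tilde g\|_{L^n}^n=(\log 4)\|\nabla g\|_{L^n(\bbbs^{n-1})}^n$); this affects neither the constant nor the exponent in the global oscillation bound.

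\emph{The sharp estimate for smooth $g$.} Fix $x,y\in\bbbs^{n-1}$, so that their geodesic distance is at most $\pi$. Starting from $|g(x)-g(y)|\le\int_\gamma|\nabla g|\,ds$ for any rectifiable path $\gamma$ from $x$ to $y$, one bounds $|g(x)-g(y)|$ by a mean of such line integrals over a suitable family of geodesic arcs joining $x$ to $y$: for antipodal $x,y$ one averages over the whole $(n-2)$-sphere of meridians, while in general one first averages over an auxiliary point of $\bbbs^{n-1}$ and uses broken geodesics. Passing to geodesic polar coordinates rewrites such an average as $\int_{\bbbs^{n-1}}|\nabla g(\omega)|\,\kappa(\omega)\,d\mathcal H^{n-1}(\omega)$ for an explicit kernel $\kappa$, and H\"older's inequality with exponents $n$ and $n/(n-1)$ gives $|g(x)-g(y)|\le\|\kappa\|_{L^{n/(n-1)}(\bbbs^{n-1})}\,\|\nabla g\|_{L^n(\bbbs^{n-1})}$. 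The verification that the kernel norm can be taken equal to $\C_M$, with the antipodal pair as the extremal configuration, is the computation carried out in \cite[Lemma~4.10.1]{IwaniecM}, and this is the only genuinely delicate point: the qualitative statement (a continuous, locally $C^{0,1/n}$ representative with some dimensional constant) is routine via the two reductions above, whereas pinning down the precise value $\C_M=(n-1)\pi(n\omega_n)^{-1/n}$ requires the right choice of averaging family and the explicit evaluation of $\|\kappa\|_{n/(n-1)}$.
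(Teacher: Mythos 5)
The paper offers no proof of this lemma at all: it is quoted verbatim from \cite[Lemma~4.10.1]{IwaniecM}, with only the remark that the proof there works for the operator norm of $Df$. Your reductions are correct and cleanly executed --- the scaling identity $\int_{\bbbs^{n-1}}|\nabla g|^n=r\int_{S(r)}|\nabla f|^n$, the Cauchy-in-$C^0$ argument via the oscillation bound plus the mean-value estimate, and the $\log 4$ computation for the $0$-homogeneous extension all check out, and they correctly isolate the Hölder exponent $1-(n-1)/n=1/n$. But you should be candid that, as a self-contained proof, the proposal stops exactly where the lemma's content begins: the integral-geometric averaging producing the kernel $\kappa$ and the evaluation $\|\kappa\|_{L^{n/(n-1)}}=\C_M=(n-1)\pi(n\omega_n)^{-1/n}$ are only described in outline and then explicitly delegated to the very reference the paper cites. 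The claim that the non-antipodal case (via broken geodesics and an auxiliary averaging point) yields the \emph{same} constant, with the antipodal pair extremal, is precisely the delicate point and is asserted rather than proved. So relative to the paper you have added correct and useful scaffolding, but the two treatments coincide in substance: both ultimately rest on Iwaniec--Martin's computation, and neither reproduces it.
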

The proof given in \cite{IwaniecM} shows that this inequality is true with respect to the operator norm $|Df|$ of the derivative $Df$.

As a corollary, we obtain that a similar inequality, with the same constant, is true for mappings into any compact Riemannian manifold $N$ without boundary.

\begin{corollary}
\label{cor:morrey}
Let $S(r)$ be an $(n-1)$-dimensional Euclidean sphere of radius $r$ and let $N$ be a compact Riemannian manifold without 
boundary. If $f\in W^{1,n}(S(r),N)$, then its $C^{0,1/n}$-H\"older continuous representative satisfies
\begin{equation}
\label{eq:morrey}
\diam f(S(r))\leq \C_M\, \Bigg( r\int_{S(r)}|Df|^n\Bigg)^{1/n}.
\end{equation}
\begin{proof}
Every separable metric space $(X,\rho)$ admits an isometric embedding into $\ell^\infty$. Indeed,  if $x_o\in X$ is a fixed point and 
$\{x_i\}_{i=1}^\infty\subset X$ is a countable dense subset, then it is easy to see that the mapping 
$$
X\ni x\longmapsto \{\rho(x,x_i)-\rho(x_i,x_o)\}_{i=1}^\infty\in\ell^\infty
$$
is an isometric embedding. 

The manifold $N$ is a metric space with respect to the Riemannian distance $d$. Let $\kappa\colon (N,d)\to \ell^\infty$ 
be an isometric embedding. If $f\in W^{1,n}(S(r),N)$, then $\kappa_i \circ f\in W^{1,n}(S(r))$ is a real valued function. 
Since the function $\kappa_i$ is $1$-Lipschitz on $(N,d)$, it easily follows that 
$$
|D(\kappa_i\circ f)|\leq |Df| \quad \text{almost everywhere.}
 $$
 Hence for any $x,y\in S(r)$, Lemma \ref{osclemma} yields 
 $$
 |\kappa_i\circ f(x)-\kappa_i\circ f(y)|\leq \C_M\, \Bigg( r\int_{S(r)}|Df|^n\Bigg)^{1/n}
 $$
 so upon taking the supremum over $i\in\N$ we have
 $$
 d(f(x),f(y))=\|\kappa\circ f(x)-\kappa\circ f (y)\|_\infty\leq \C_M\, \Bigg( r\int_{S(r)}|Df|^n\Bigg)^{1/n}
 $$
 and the result follows.
\end{proof}
\end{corollary}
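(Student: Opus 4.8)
The plan is to reduce the manifold-valued inequality to the scalar inequality of Lemma~\ref{osclemma} by isometrically embedding the target into a sequence space. Since $N$ is a compact Riemannian manifold, $(N,d)$ with its Riemannian distance is a separable (indeed compact) metric space, so it admits a Kuratowski--Fr\'echet type isometric embedding $\kappa\colon(N,d)\to\ell^\infty$: fixing a basepoint $x_o\in N$ and a countable dense set $\{x_i\}\subset N$, the map $x\mapsto(d(x,x_i)-d(x_i,x_o))_i$ takes values in $\ell^\infty$ and preserves distances \emph{exactly}, and each coordinate functional $\kappa_i$ is $1$-Lipschitz on $(N,d)$.

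First I would check that for $f\in W^{1,n}(S(r),N)$ each composition $\kappa_i\circ f$ is a real-valued function in $W^{1,n}(S(r))$ with $|D(\kappa_i\circ f)|\le|Df|$ almost everywhere, the norm being the operator norm as in Lemma~\ref{osclemma}. This is the standard fact that a Lipschitz function of a Sobolev map is Sobolev, together with the pointwise chain rule: since $\kappa_i$ has differential of operator norm at most $1$, we get $|D(\kappa_i\circ f)(x)\xi|\le|Df(x)\xi|$ for every tangent vector $\xi$, hence the claimed bound on operator norms. Note that we never need $\kappa\circ f$ itself to be strongly measurable as an $\ell^\infty$-valued map --- only the countably many real-valued functions $\kappa_i\circ f$ enter the argument.

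Next I would apply Lemma~\ref{osclemma} to each $\kappa_i\circ f$: its $C^{0,1/n}$-representative satisfies $|\kappa_i\circ f(x)-\kappa_i\circ f(y)|\le\C_M(r\int_{S(r)}|Df|^n)^{1/n}$ for all $x,y\in S(r)$, with a bound \emph{uniform in $i$}. Taking the supremum over $i\in\N$ turns the left-hand side into $\|\kappa\circ f(x)-\kappa\circ f(y)\|_\infty$, which equals $d(f(x),f(y))$ because $\kappa$ is an isometry; taking then the supremum over $x,y$ yields exactly \eqref{eq:morrey}. That $f$ itself (not merely its coordinates) has a $C^{0,1/n}$-representative then follows, since $d(f(x),f(y))\le\C_M|x-y|^{1/n}(\int_{S(r)}|Df|^n)^{1/n}$ holds off a null set and this is precisely a uniform H\"older estimate.

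I do not expect a serious obstacle; the argument is the routine ``embed isometrically and reduce to the scalar case'' device. The single point requiring care is that the embedding must be a genuine isometry for the intrinsic (Riemannian) distance $d$, so that the countable supremum on the left really recovers $d(f(x),f(y))$ and not some smaller extrinsic quantity --- this is exactly why one uses the Kuratowski-type embedding into $\ell^\infty$ rather than the given embedding of $N$ into $\bbbr^k$, whose chordal distance is not the Riemannian one.
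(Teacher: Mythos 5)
Your proposal is correct and coincides with the paper's own argument: the same Kuratowski-type isometric embedding of $(N,d)$ into $\ell^\infty$, the coordinatewise application of Lemma~\ref{osclemma} with the bound $|D(\kappa_i\circ f)|\leq|Df|$, and the supremum over $i$ recovering the Riemannian distance. No differences worth noting.
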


\subsection{Choosing representatives of Sobolev functions}
The following application of Fubini's theorem to Sobolev functions can be reproduced (and strengthened) using the notions of 
Sobolev capacities and quasicontinuous representatives; we prove it,  
however, more directly.
\begin{lemma}
\label{continuity a.e.}
Let $f\in W^{1,p}(\R^n)$ with $p>n-1$. Then there exists a full measure subset  $A\subset\R^n\times (0,\infty)$  such that 
$f$ (or, more precisely, a certain representative of $f$) is H\"older continuous on every sphere $S(x,r)$ with $(x,r)\in A$.
In other words, $f$ is H\"older continuous on a.e. sphere in $\R^n$.
\end{lemma}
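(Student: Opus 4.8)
The plan is to reduce the statement to a Fubini-type argument combined with Morrey's inequality on spheres. First I would parametrize spheres by their center and radius: for each fixed center $x\in\R^n$ and radius $r>0$, the restriction $f|_{S(x,r)}$ makes sense once we show it lies in $W^{1,p}(S(x,r))$ for almost every pair $(x,r)$. The key classical input is that a function $f\in W^{1,p}(\R^n)$ has a representative $\tilde f$ (for instance, the precise representative, or simply one obtained by mollification along a subsequence) that is absolutely continuous on almost every line parallel to each coordinate axis, and whose restriction to almost every sphere is weakly differentiable with the tangential gradient bounded by $|Df|$ restricted to that sphere. Once this is in place, one uses $p>n-1=\dim S(x,r)$ together with Lemma \ref{osclemma} (in the form already used in the body: a $W^{1,p}$ function on an $(n-1)$-sphere with $p>n-1$ has a Hölder continuous representative) to conclude Hölder continuity on each such sphere.

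The concrete steps are as follows. First, fix a representative $\tilde f$ of $f$ and mollifications $f_k = f * \varphi_{1/k}$; then $f_k \to f$ in $W^{1,p}(\R^n)$, so passing to a subsequence we may assume $f_k \to f$ and $Df_k \to Df$ pointwise a.e. and in $L^p$. Second, for a fixed center $x_0$, integrate in polar coordinates around $x_0$: by Fubini, for a.e. $r$ one has $f_k|_{S(x_0,r)} \to f|_{S(x_0,r)}$ in $W^{1,p}(S(x_0,r))$, because $\int_0^\infty \|Df_k - Df\|_{L^p(S(x_0,r))}^p\, dr \to 0$ and similarly for the functions themselves. Since each $f_k$ is smooth, $f_k|_{S(x_0,r)}$ is smooth, so the $W^{1,p}$-limit $f|_{S(x_0,r)}$ genuinely lies in $W^{1,p}(S(x_0,r))$ for a.e. $r$, and moreover the Hölder seminorms of $f_k|_{S(x_0,r)}$ are controlled by $\bigl(r\int_{S(x_0,r)}|Df_k|^n\bigr)^{1/n}$ — wait, here $p$ may be less than $n$, so instead I would invoke the Morrey estimate on the $(n-1)$-sphere with exponent $p>n-1$, giving uniform Hölder bounds; these pass to the limit, so $f|_{S(x_0,r)}$ has a Hölder continuous representative. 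Third, let $A$ be the set of $(x,r)$ for which this holds. To see $A$ has full measure in $\R^n\times(0,\infty)$: the above shows that for each fixed $x_0$ the slice $\{r : (x_0,r)\in A\}$ has full measure in $(0,\infty)$, and since this fails only on a set of $r$'s of measure zero for every $x_0$, Fubini on $\R^n\times(0,\infty)$ gives $|(\R^n\times(0,\infty))\setminus A| = 0$.

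The main obstacle is the bookkeeping needed to pin down a single representative $\tilde f$ that simultaneously works on almost every sphere \emph{and} (for the corollary used in the body, Corollary \ref{cor:12}) on almost every coordinate line — one must choose the mollification subsequence once and for all, and then check that the exceptional null sets coming from the line-ACL property and from the various families of concentric spheres can all be absorbed into a single null set in $\R^n\times(0,\infty)$. A secondary subtlety is that the sphere $S(x,r)$, as a Riemannian submanifold, changes with $(x,r)$, so one should either work in the fixed model $S^{n-1}$ via the scaling and translation diffeomorphisms (under which $W^{1,p}$ norms transform controllably) or phrase everything through the coarea formula $\int_{\R^n} g\,dy = \int_0^\infty \int_{S(x_0,r)} g\, d\mathcal H^{n-1}\, dr$; either way the estimates are routine once the representative is fixed. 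I would present the coarea/Fubini reduction in detail and leave the standard facts about mollification and ACL representatives as citations.
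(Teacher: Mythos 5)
Your proposal follows essentially the same route as the paper: smooth approximation, Fubini in polar coordinates, and Morrey's inequality on $(n-1)$-spheres with $p>n-1$, with the representative of $f$ taken to be the pointwise limit of the approximating subsequence. The only real difference is bookkeeping: where you argue slice-by-slice in the center $x_0$ and then apply Fubini to the product space (which requires the subsequence to be chosen uniformly in $x_0$ and the exceptional set to be measurable --- exactly the obstacle you flag), the paper integrates the sphere-integrals $F_k(x,r)$ against the finite measure $e^{-|x|^2}\,dx\otimes dr$ on $\R^n\times(0,\infty)$ and extracts a single subsequence with $F_k\to 0$ a.e.\ on the product, which disposes of both points at once.
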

\begin{proof}
Let $(f_k)\in C^{\infty}(\R^n)$ be a sequence converging to $f$ in $W^{1,p}(\R^n)$. Then, for any fixed $x\in\R^n$,
\begin{equation}
\label{fubini}
\begin{split}
\int_{\R^n} &\big(|f-f_k|^p +|D f -D f_k|^p\big)\\
&=\int_0^\infty\int_{S(x,r)}\big(|f-f_k|^p +|D f -D f_k|^p\big)dr\to 0\quad\text{ as }k\to\infty.
\end{split}
\end{equation}
with the convergence rate obviously uniform in $x$. Therefore,  writing 
$$
{F_k(x,r)=\int_{S(x,r)}\big(|f-f_k|^p +|D f -D f_k|^p\big)}
$$ 
and integrating \eqref{fubini} over $\bbbr^n$ with respect to the measure $d\mu(x)=e^{-|x|^2}\, dx$
\begin{equation}
\label{937}
\int_{\R^n}\int_0^\infty F_k(x,r) dr\, d\mu(x) \to 0 \quad \text{ as }k\to\infty,
\end{equation}
meaning that $F_k\to 0$ in $L^1(\R^n\times (0,\infty),d\mu\otimes dr)$ with $k\to\infty$. 
We had to integrate with respect to a finite measure $\mu$ on $\bbbr^n$ as otherwise the integral \eqref{937} would be equal infinity.
Passing to a subsequence (denoted again by $F_k$) we may 
assume that $F_k\to 0$ a.e. in $\R^n\times(0,\infty)$, thus there is a set $A$ of full measure in $\R^n\times(0,\infty)$ such that
$$
\int_{S(x,r)}\big(|f-f_k|^p +|D f -D f_k|^p\big)\to 0 \quad \text{ when }(x,r)\in A.
$$
Since $p>n-1=\dim S(x,r)$, Morrey's theorem implies that the sequence $f_k$ converges on $S(x,r)$ uniformly
to a H\"older continuous function, thus its pointwise limit is H\"older continuous. 
Choosing the representative of $f$ to be the pointwise limit of $f_k$ (whenever it exists) proves the claim.
\end{proof}
A similar result (through an analogous reasoning) holds also for lines: $f$ has a representative which is (absolutely) 
continuous on almost all lines parallel to the coordinate directions, see e.g. \cite[Theorem 4.21]{EvansGariepy}.
In all cases the representative is given as a pointwise limit of a smooth approximation, therefore we may assume it is the 
same representative as the one in Lemma~\ref{continuity a.e.}.
This in particular implies the following useful corollary.
\begin{corollary}\label{cor:12}
Let $f\in W^{1,p}(\R^n)$ with $p>n-1$. 
Then $f$ has a representative that is absolutely continuous on almost all lines parallel to coordinate directions and 
there exists a full measure subset $X\subset\R^n$ such that for every $x\in X$ the 
function $f$ is H\"older continuous on a.e. sphere centered at $x$.
\end{corollary}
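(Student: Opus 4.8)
The plan is to read the corollary off from Lemma~\ref{continuity a.e.}, the analogous absolute-continuity-on-lines theorem, and Fubini's theorem, the only genuine care being that a single representative of $f$ is made to serve all the assertions at once.

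First I would fix a sequence $f_k\in C^\infty(\R^n)$ converging to $f$ in $W^{1,p}(\R^n)$, pass to the subsequence produced in the proof of Lemma~\ref{continuity a.e.}, and take as the representative of $f$ the pointwise limit of $f_k$ at every point where this limit exists. By that lemma there is a set $A\subset\R^n\times(0,\infty)$ of full measure on which this representative is H\"older continuous on the sphere $S(x,r)$ for every $(x,r)\in A$; here "full measure" may be taken with respect to $dx\otimes dr$, since the Gaussian weight $d\mu(x)=e^{-|x|^2}\,dx$ used in the proof is mutually absolutely continuous with Lebesgue measure. The standard ACL theorem (e.g.\ \cite[Theorem~4.21]{EvansGariepy}) applies to the very same pointwise-limit representative and yields absolute continuity on almost all lines parallel to the coordinate axes, which is the first assertion of the corollary.

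Next I would apply Fubini's theorem to the null set $(\R^n\times(0,\infty))\setminus A$: for almost every $x\in\R^n$ the fiber $A_x=\{r>0:(x,r)\in A\}$ has full measure in $(0,\infty)$. Letting $X$ be the collection of such $x$, we obtain a full measure subset $X\subset\R^n$ with the property that for each $x\in X$ the chosen representative of $f$ is H\"older continuous on $S(x,r)$ for almost every $r>0$, i.e.\ on almost every sphere centered at $x$. Combined with the previous paragraph, this is exactly the statement of the corollary.

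I do not expect a real obstacle here: the corollary is essentially a repackaging of Lemma~\ref{continuity a.e.} via Fubini. The one point deserving attention is the compatibility of representatives — that the "H\"older on spheres" representative and the "absolutely continuous on lines" representative can be chosen to coincide — and this is automatic, because in both results the good representative is exhibited as the pointwise limit of the same smooth approximating sequence, as already observed in the remark preceding the corollary.
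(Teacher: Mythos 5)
Your proposal is correct and follows exactly the route the paper intends: take the pointwise-limit representative from the smooth approximating sequence of Lemma~\ref{continuity a.e.}, apply Fubini to the full-measure set $A\subset\R^n\times(0,\infty)$ to extract $X$, and note that the ACL representative can be taken to be the same one since it too arises as a pointwise limit of the smooth approximation. The remark about the Gaussian weight being mutually absolutely continuous with Lebesgue measure is a correct (and worthwhile) clarification of a point the paper leaves implicit.
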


\end{document}